\title{Random Walks on Directed Covers of Graphs}
\author{Lorenz A. Gilch\footnote{email: gilch@TUGraz.at; supported by German Research Foundation (DFG) grant
GI 746/1-1} , Sebastian M\"uller\footnote{email:
dr.sebastian.mueller@gmail.com; supported by German Research Foundation (DFG)
grant MU 2868/1-1 }\vspace{0.5cm}\\ Institut f\"ur mathematische
Strukturtheorie \\
  Graz University of Technology\\ Steyrergasse 30/III\\ A-8010 Graz\\ Austria}
\chardef\bslash=`\\ % p. 424, TeXbook
\newtheorem{thm}{Theorem}[section]
\newtheorem{cor}[thm]{Corollary}
\newtheorem{lem}[thm]{Lemma}
\newtheorem{prop}[thm]{Proposition}
\theoremstyle{definition}
\theoremstyle{remark}
\newtheorem{rem}{Remark}[section]
\newtheorem{ex}{Example}[section]
\def\orig{o}
\newcommand{\eps}{\varepsilon}
\newcommand{\R}{\mathbb{R}}
\newcommand{\C}{\mathbb{C}}
\newcommand{\Z}{\mathbb{Z}}
\newcommand{\N}{\mathbb{N}}
\newcommand{\T}{\mathcal{T}}
\newcommand{\U}{\mathcal{U}}
\renewcommand{\P}{\mathbb{P}}
\newcommand{\E}{\mathbb{E}}
\newcommand{\1}{\textbf{1}}
\newcommand{\p}{\textbf{p}}
\newcommand{\m}{\textbf{m}}
\newcommand{\e}[1]{\mathbf{e}_{#1}}
\newcommand{\W}[1]{\mathbf{W}_{#1}}
\DeclareMathOperator{\esssup}{ess\,sup}
\newcommand{\eval}[2][\right]{\relax
  \ifx#1\right\relax \left.\fi#2#1\rvert}
\begin{document}
\maketitle

{\abstract Directed covers of finite graphs are also known as
periodic trees or trees with finitely many cone types. We expand
the existing theory of directed covers of finite graphs to those
of infinite graphs. While the lower growth rate still equals the
branching number, upper and lower growth rates do not longer
coincide in general. Furthermore, the behaviour of random walks on
directed covers of infinite graphs is more subtle. We provide a
classification in terms of recurrence and transience and point out that the
critical random walk may be recurrent or transient. Our proof is
based on the observation that recurrence of the random walk is
equivalent to the almost sure extinction of an appropriate
branching process. Two examples in random environment are
provided: homesick random walk on infinite percolation clusters
and random walk in random environment on directed covers. Furthermore, we
calculate, under
reasonable assumptions, the rate of escape with respect
to suitable length functions and prove the existence of the
asymptotic entropy providing an explicit formula which is also a
new result for directed covers of finite graphs. In particular,
the asymptotic entropy of random walks on directed covers of
finite graphs is positive if and only if the random walk is
transient.}
\newline {\scshape Keywords:} trees,  random walk, recurrence, transience,
upper  Collatz-Wielandt number, branching process, rate of
escape, asymptotic entropy
\newline {\scshape AMS 2000 Mathematics Subject Classification: } 05C05, 60J10 , 60F05,  60J85

\renewcommand{\sectionmark}[1]{}

\section{Introduction}
Suppose we are given a connected, directed graph $G$ with vertex set $V$,
edge set $E$, and root $i_0$. We
construct  a labelled tree $\mathcal{T}$ from $G$. We start with
the root that is labelled with $i_0$. Recursively, if $x$ is a
vertex in the tree with label $i\in V$, then $x$ has $d(i,j)$ successors
with label $j$ if and only if there are $d(i,j)$ edges from $i$ to $j$ in
$G$. The tree $\mathcal{T}$ is called the \emph{directed cover} of
$G$. This model generalizes previously investigated ones, namely
random walks on  directed covers of finite graphs.  These trees
are also known as \emph{periodic trees}, compare with
\textsc{Lyons} \cite{lyons90}, or \emph{trees with finitely many
cone types}, compare with \textsc{Nagnibeda and Woess}
\cite{nagnibeda:02}.

If $G$ is finite the directed cover has the  property that the
growth rate exists and equals the branching number of $\T$, compare with
\textsc{Lyons} \cite{lyons:book}. \textsc{Lyons} \cite{lyons90}
investigated the behaviour of homesick random walks on those
trees. \textsc{Takacs} \cite{takacs} computed a formula for the
rate of escape of homesick random walks. \textsc{Nagnibeda and
Woess} \cite{nagnibeda:02} studied more general random walks on
these trees and gave a criterion for transience, null-recurrence,
and positive recurrence. This criterion depends on the largest
eigenvalue of a positive matrix arising from the transition
probabilities. They also computed, among other things, a formula for
the rate of escape of random walks on directed covers of finite
graphs. It is worth mentioning that the model of random strings
discussed in \textsc{Gairat et al.} \cite{gairat} offers a
different point of view of this model.

Most of the arguments in the finite case are based on the existence of
the Perron-Frobenius eigenvalue of appropriate non-negative
matrices. In the infinite setting, the matrices become
non-negative operators and the existence of a largest eigenvalue
can no longer be guaranteed. If there exists a largest eigenvalue
with positive left and right eigenvectors the study is analogous to
the finite case, but in general the behaviour becomes more
subtle. In particular, the lower and upper growth rates of the
cover, that are defined by $\liminf_{n\to\infty}
|\mathcal{T}_n|^{1/n}$ and $ \limsup_{n\to\infty}
|\mathcal{T}_n|^{1/n}$, where $\mathcal{T}_n$ is the number of
vertices in $\mathcal{T}$ at height $n$,  are no longer equal,
compare with Example \ref{ex:no_gr}. However,  the lower growth
rate and branching number coincide and equal the upper
Collatz-Wielandt number of the adjacency matrix of $G$, see
Theorem \ref{thm:growth}.

The first main result, Theorem
\ref{thm:rec}, is the classification of random walks on
directed covers according to their transience and recurrence behaviour. This result is given in terms of a bounded operator
$M$ that describes the relation between  \emph{forward} and
\emph{backward} probabilities of the random walk.  We show that if
the upper Collatz-Wielandt number, $\lambda^+(M)$, of this
operator is smaller than $1$ the process is recurrent and if it is
greater than $1$ it is transient. In the critical case,
$\lambda^+(M)=1$, the random walk may be recurrent or transient,
compare with Subsection \ref{subseq:ex} and Example
\ref{ex:lambda=1}. This is in contrast with the finite setting,
where the critical random walk is recurrent; e.g. see
\textsc{Nagnibeda and Woess} \cite{nagnibeda:02}. The idea of our
proof is based on the observation that recurrence of the random
walk is equivalent to a.s. global extinction of an appropriate
infinite-type Galton-Watson process, compare with Theorem
\ref{thm:br_mc}. Therefore, our approach gives an alternative
proof, that uses standard results for multi-type Galton-Watson
processes, for random walks on directed covers of finite graphs.
In Subsection \ref{subseq:ex} we study several examples where a
complete classification is given.

Another result is that the rate of escape w.r.t. different
appropriate length functions exists under reasonable assumptions. For this
purpose, we assume the
spectral radius to be strictly smaller than $1$ and 
positive recurrence of a new Markov chain on $G$, which describes the end of
the tree to which the random walk on $\T$ converges. As in \textsc{Nagnibeda
  and Woess} \cite{nagnibeda:02}, the existence of the rate of escape can not
be shown by straight-forward arguments using Kingman's subadditive ergodic
theorem. 
 Thus, we use modified exit times from \cite{nagnibeda:02} to prove 
 existence of the rate of escape with respect to different length functions, see
Theorem \ref{thm:rateofescape}. This enables us to prove 
another main result of the paper, namely the existence of
the asymptotic entropy $h=\lim_{n\to\infty}
\E\bigl[-\frac{1}{n}\log \pi_n(X_n)\bigr]$ under reasonable
assumptions, where $(X_n)_{n\in\N_0}$ is a random walk on a
directed cover and $\pi_n$ is its distribution at time $n$. This
result, whose proof envolves generating functions techniques, is also new for
the case when $G$ is finite. While it
is well-known that entropy (introduced by \textsc{Avez}
\cite{avez72}) exists for random walks on groups existence for
random walks on other structures is not known a priori. For more details about
entropy of random walks on groups see \textsc{Kaimanovich and Vershik}
\cite{kaimanovich-vershik} and \textsc{Derriennic}
\cite{derrienic}. 
In particular, we show that the asymptotic entropy
equals the rate of escape with respect to a distance function in terms of Green
functions. This also implies that the asymptotic entropy
equals the rate of escape with respect to the Green metric (introduced by
Blach\`ere and Brofferio \cite{blachere-brofferio}), which is given by
$d_G(x,y)=-\log F(x,y)$, where $F(x,y)$ is the probability of ever hitting $y$
when starting at $x$. The technique of our proof was motivated by
\textsc{Benjamini and Peres} \cite{benjamini-peres94}, where it
is shown that for random walks on finitely generated groups the entropy equals
the rate of escape w.r.t. the Green metric.
\textsc{Blach\`ere, Ha\"issinsky and Mathieu}
\cite{blachere-haissinsky-mathieu} generalized this result to random walks on
countable groups. We also want to mention the work of Bj\"orklund
\cite{bjorklund}, who gave an interpretation of the
Green metric in terms of Hilbert metrics.
Our result also includes an
explicit formula for the entropy and shows that the entropy can be
computed along almost every sample path. Furthermore, we get
convergence in $L_1$ of $-\frac{1}{n}\log \pi_n(X_n)$ to $h$, see \mbox{Theorem \ref{thm:entropy3}.}

The paper is organized as follows. In Section \ref{sec:preli} we
give the basic notations and definitions concerning graphs and
trees. The main results, Theorem \ref{thm:rec}, Theorem
\ref{thm:rateofescape}, and Theorems \ref{th:entropy}, \ref{thm:entropy3}, together
with  examples and discussions are presented in Section
\ref{sec:results}. All proofs are given in \mbox{Section \ref{sec:proofs}.}

\section{Preliminaries}\label{sec:preli}
Most of the time we follow the notation of
\cite{lyons:book}, where the reader can find the basic definitions
and results concerning graphs and random walks on trees. For general information on non-negative matrices we refer to \cite{seneta81} and on Banach lattices and positive operators to \cite{schaefer}.

\subsection{Notations and Definitions}\label{subsec:not}
Let  $G=(V,E)$ be a directed graph with countable vertex set $V$,
edge set $E$ and root $\orig$. For ease of presentation, we also identify a graph with its vertex set, i.e.,
$x\in G$ means $x\in V$. The \emph{adjacency matrix}
$A=\bigl(a(i,j)\bigr)_{i,j\in G}$ of $G$ is defined by $a(i,j)=1$
if there is an edge from $i$ to $j$ and $0$ otherwise. A graph is
called \emph{locally finite} if the row and column sums of $A$ are
finite. A graph is (strongly) \emph{connected} if there is a directed path
from every vertex to any other vertex, and it has \emph{bounded
geometry}  if the vertex degrees are uniformly bounded. A
\emph{tree} $\T$ is an undirected graph in which every  two
vertices are connected by exactly one shortest path. A \emph{rooted tree}
is a tree with a distinguished vertex, the root $\orig$. This
endows the tree with a natural orientation: towards or away from
the root. Furthermore, denote by $|x|$ the natural distance from a
vertex $x$ of $\T$ to $o$, i.e., the length of the shortest path (we call such
a shortest path also a \textit{geodesic})
from $\orig$ to $x$. Let $\T_{n}$ be the set of vertices at
distance $n$ from the root $\orig$. Every vertex $x\in\T_n$ has a
unique geodesic $\langle \orig=x_0,x_1,\ldots,x_n=x\rangle$ coming
from the origin $\orig$. The vertex $x_{n-1}$ is called the
\emph{ancestor} $x^-$ of $x$ and $x$ is called the \emph{direct
descendent} or \emph{successor} of $x^-$. In general, a vertex $y$
is a \emph{descendent} of $x$ if $x$ lies on the unique geodesic
from $\orig$ to $y$.  A \emph{ray} $\xi=\langle o=x_0, x_1,\ldots
\rangle$ is an infinite path from $\orig$ to infinity that doesn't
backtrack, i.e., $x_i\neq x_j$ for all $i\neq j$. We call the set
of all rays of $\T$ the \emph{(end) boundary} of $\T$, denoted by
$\partial\T$. There is a natural metric on $\partial T:$ if two
rays $\xi$ and $\eta$ have exactly $n$ edges in common  their
distance is defined as $d(\xi,\eta):=e^{-n}$. A \emph{flow}
$\theta$ is a non-negative function on the vertices of $\T$ such
that $\theta(x) = \sum_{y\in\T, y^-=x} \theta(y)$. A flow $\theta$
is called a \emph{unit flow} if $\theta(o)=1$. The \emph{lower}
and \emph{upper growth rates} of a tree $\T$ are defined as
$$
\underline{gr}(\T):=\liminf_{n\to\infty} |\T_n|^{1/n}\ \mbox{ and } \
\overline{gr}(\T):=\limsup_{n\to\infty} |\T_n|^{1/n},
$$
where $|\T_n|$ is the cardinality of the set $\T_n$. If these numbers are
equal we speak of the \emph{growth rate} $gr(\T)=\lim_{n\to\infty}
|\T_n|^{1/n}$ of $\T$. Another method to measure the growth of a
tree is the \emph{branching number} $br(\T)$. We recall two
possible definitions. The first uses the concept of flows while
the second uses the Hausdorff dimension $\dim \partial \T$ of the
boundary $\partial \T$ of the tree $\T:$
\begin{eqnarray*}\label{eq:def_br}
br(\T)  &:= & \sup \left\{ \lambda>0 \,\Bigl|\, \exists\mbox{ flow
} \theta \ \forall x\in\T:~0\leq
 \theta(x)\leq \lambda^{-|x|}\right\}\\&:=&\exp\, \dim \partial \T.
\end{eqnarray*}
Let us remark that there is the following general connection between the
lower growth rate and the branching number
%\Midline{(compare with Lyons)} \verb"It's the first Exercise it his book!":
\begin{equation}\label{eq:br_gr}
br(\T)\leq \underline{gr}(\T).
\end{equation}

\subsection{Non-negative Infinite Matrices}
Let  $M:=\bigl(m(x,y)\bigr)_{x,y\in G}$ be an infinite matrix with
non-negative entries. For $n\in\N$, let
$M^n=\bigl(m^{(n)}(x,y)\bigr)_{x,y\in G}$ be the $n$-th matrix power of
$M$ and set $M^0:=I$, the identity matrix over $\N$.
A non-negative matrix $M$ is called irreducible if  for all $x,y\in G$ there
exists some $k\in\N$ such that $m^{(k)}(x,y)>0$. We will assume
throughout the paper that there exist constants $c,C>0$
such that
\begin{equation}\label{eq:assumption}
0<c< \sum_{y\in G} m(x,y) < C <\infty \quad \textrm{ for all } x\in G.
\end{equation}
Due to the upper bound  the matrix $M$ can be interpreted as a
bounded linear operator on $\ell_p,~ p\in[1,\infty]$. For each
$f\in\ell_p$, let $Mf(x)=\sum_{y\in G} m(x,y)f(y)$.  The spectrum
of $M$ is $\sigma(M,\ell_p) := \bigl\lbrace \lambda\in\C\, \mid \,
\lambda I-T\mbox{ is not a bijection of } \ell_p\bigr\rbrace $. It
is compact and non-void. Denote by
$$
r_p(M):=\sup\bigl\lbrace |\lambda|\,\bigl|\,\lambda\in
\sigma(M,\ell_p)\bigr\rbrace =\lim_{n\to\infty} \sqrt[n]{\|M^n\|_p}
$$
the $\ell_p$-spectral radius of $M$. Observe that in general the
spectral radius is not an eigenvalue, and hence no direct
generalization of the Perron-Frobenius eigenvalue from the finite
dimensional case exists. A priori it is not clear if some $\ell_p$-spectral radius may serve as an analogon for the Perron-Frobenius eigenvalue in the finite dimensional setting. Indeed, the following characteristic turns out to be appropriate:
$$
\lambda^+(M):=\sup\bigl\lbrace \lambda>0 \,\bigl|\, \exists\, 0< f\in
\ell_\infty:~  Mf\geq \lambda f\bigr\rbrace.
$$
This number is also known as the \emph{upper Collatz-Wielandt
number}, compare with \cite{foerster:98}. The number
$\lambda^+(M)$ is well-defined, since under the general
assumption (\ref{eq:assumption}) we have
$$
c \leq \inf_{x\in G} \sum_{y\in G} m(x,y) \leq \lambda^+(M) \leq \sup_{x\in G}
\sum_{y\in G} m(x,y) \leq C.
$$
The lower bound for $\lambda^+(M)$ is obtained by investigating
$M\1$, where $\1$ is the vector with all entries equal to $1$. The
upper bound is obvious, since with $f\in\ell_\infty$ such that
$\|f\|_\infty =1$ we obtain $Mf (x) \leq \sum_{y\in G} m(x,y)$.
Furthermore, it is easy to see that the upper Collatz-Wielandt
number is less than or equal to the $\ell_\infty$-spectral radius of
$M$, that is,
$$
\lambda^+(M) \le r_\infty(M).
$$
If $M$ is homogeneous in the sense of quasi-transitiveness, we
have that $\lambda^+(M) = r_\infty(M)$, compare with Example
\ref{ex:quasi}. We also refer to \cite{vonBelow} where several other relations between the $\ell_p$-spectral radii and $\lambda^+(M)$ for symmetric matrices are discussed.

\begin{rem}\label{rem:Z}
The $\sup$ in the definition of $\lambda^+(M)$ may or may not
be attained.  Consider  the adjacency matrix $A_\Z$ of the graph
$G=\Z$. By the latter we mean the graph $G$ with $V=\Z$ and
$E=\bigl\lbrace (x,y)\in\mathbb{Z}^2 \,\mid
\,|x-y|=1\bigr\rbrace$. Clearly, $\lambda^+(A_\Z)=2$ and the
$\sup$ is attained with the vector $\1$. On the other hand,
consider $G=\N$ with its adjacency matrix $A_\N$. We still have
that $\lambda^+(A_\N)=2$. This can be seen using a recurrence
argument or observing that $r_2(A_\N)=2$ and $r_2(A_\N)\leq
r_\infty(A_\N)\leq 2$. Assume that $\lambda^+(A_\N)$ is attained
using the function $f$. Since $f(1)\geq 2 f(0)$ and $f(n+1)\geq
2 f(n) -f(n-1)$ for all $n\geq 1$ we see that $f$ is unbounded and
hence obtain a contradiction.
\end{rem}

\begin{rem}\label{rem:gen:perron}
Another characteristic that might serve as a generalization of the Perron-Frobenius eigenvalue to the infinite setting is $\rho(M):=\limsup_{n\to\infty}
\left(m^{(n)}(x,y)\right)^{1/n}$. The latter is independent of the specific choice of
$x,y$ and equals the $\ell_2$-spectral radius if $M$ is symmetric, e.g. compare
with \cite[Chapter II, Section 10]{woess}. It can be seen analogously to von
Below \cite[Corollary 4.6]{vonBelow} that
$\rho(M) \leq \lambda^+(M)$. 
Furthermore, $\rho(M)$ can be given
in terms of convergence parameters of
Green functions, rate functions of large deviation principles, and
super-harmonic functions.
Moreover, $\rho(M)$ depends on local
properties, since
\begin{equation}\label{eq:spec_approx}
 \rho(M)=\sup_{F\subset G, |F|<\infty} \rho (M_F),
\end{equation}
where $M_F=\bigl(m_F(x,y)\bigr)_{x,y\in F}$ is the matrix induced
by $F$, i.e., $m_F(x,y)=m(x,y)$ for $x,y\in F$ and $0$ otherwise.
This fact indicates that $\rho(M)$ is not the  \emph{good}
characteristic for recurrence and transience, since these
properties describe a global behaviour.
\end{rem}

\begin{rem}
Let $A$ be the adjacency matrix of a graph $G$, which is symmetric, that is, there is an
edge from $i\in G$ to $j\in G$ if and only if there is an edge from $j$ to $i$. While in the
finite case we always have $r_2(A) =r_\infty(A)$, in general we only
have $r_2(A)\leq r_\infty(A)$ for infinite $A$.
Let $G=\T_d$ be the regular tree with degree $d\geq 3$. Clearly, we
have $r_\infty=d$ and it is very well-known that $r_2(A)= 2
\sqrt{d-1}$. It is worth noting that if
$r_2(A)<r_\infty(A)$ we can not approximate $r_\infty(A)$ using Perron-Frobenius eigenvalues of finite subgraphs,
compare with Equation (\ref{eq:spec_approx}).
\end{rem}

\section{Results}\label{sec:results}

\subsection{Trees as Directed Covers of Infinite Graphs of Bounded Geometry}\label{subsec:trees}
Suppose we are given a connected and directed graph $G=(V,E)$ of
bounded geometry without multiple edges. In
some cases, e.g. Remark \ref{rem:loc:infinite}, we can drop the
assumption of bounded geometry. Let $i_0\in V$ be a distinguished vertex of $G$.
The \emph{directed cover} $\T$ of
$G$ (rooted in $i_0$) is defined recursively as a rooted tree
$\mathcal{T}$, whose vertices are labelled by the vertex set $V$. The root $\orig$
of $\T$ has label $i_0$; recursively, if $x\in \mathcal{T}$ is
labelled with $i$, then $x$ has one direct descendent with label
$j$ if and only if there is an edge from $i$ to $j$ in $G$. And
vice versa we define the \emph{label function}  $\tau:\T\to V$ to
be the function that associates to each vertex in $\T$ its label
in $V$. In order to distinguish between the two graphs $G$ and $V$
we use in general the variables $i,j$ for vertices in $G$ (and
labels in $T$) and $x,y$ for the vertices of $\T$.  We refer to
\cite{lyons90} and \cite{nagnibeda:02} for more details and
references for finite graphs $G$. In the finite case these trees
are known as \emph{periodic trees} (\cite{lyons90}, \cite{takacs})
or \emph{trees with finitely many cone types}
(\cite{nagnibeda:02}).

We remark that we may assume w.l.o.g. that the graph has no
multiple edges. Indeed, suppose that $G$ has multiple edges. Then one can
replace $G$ by a new graph $G'$, which arises from $G$ by introducing new
vertices and edges and deleting those multiple edges. That is, if
there are $m>2$ edges from $i$ to $j$ in $G$, then we add to $G$ the new
vertices $j_1,\dots,j_{m-1}$ and add an edge from $i$ to each of these new
vertices. Each of these new vertices is again connected with every vertex which
has an edge coming from $j$. Repeating this operation gives a new graph $G'$, which has
now no multiple edges. 

Note that there is a one-to-one correspondence between all finite
paths $\langle i_0, i_1, i_2,\ldots i_n\rangle$ in $G$ starting at
$i_0$ and all vertices in $\T$. Each vertex $x_n\in\T$ at height
$n$ corresponds to a unique geodesic $\langle \orig, x_1,
x_2,\ldots, x_n\rangle$ coming from the origin. This geodesic is
uniquely determined by its labels, i.e., the path  $\langle i_0,
\tau(x_1), \tau(x_2),\ldots, \tau(x_n)\rangle$ in $G$.
\par

The \emph{cone} $\T^x:=\{y\in \T\,\mid \, y~\mbox{ is a descendent
of } x\}$ is the subtree rooted at $x$ and spanned by all vertices
$y$ such that $x$ lies on the geodesic from $o$ to $y$. We say that $\T^x$ has
\emph{cone type} $\tau(x)$. Observe that if $x,y\in\T$ with $\tau(x)=\tau(y)$
then the
trees $\T^x$ and $\T^y$ are
isomorphic as rooted trees.
\par
%Since a priori every tree has infinitely many cone types we will
%assume that they are irreducible, i.e., every cone type contains
%any other cone type as a subtree. Clearly, this is guaranteed for
%directed covers of connected graphs. It is easily seen that every
%tree with irreducible cone types may arise as the directed cover
%of an appropriate directed connected graph.

%\begin{rem}
%Note that every tree with irreducible cone types may arise as a
%directed cover of a (not necessarily unique) graph. The
%construction of such a graph is quite intuitive. We start with the
%\emph{empty} graph $G$, i.e., $V=E=\emptyset$. Now, we add a
%vertex $i_0$ to $G$ that corresponds to the cone type of
%$\orig\in\T$.  For each descendant of the root whose cone is not
%isomorphic to the cone of $\orig$ we add new vertices  to $G$.
%Furthermore, we add corresponding edges from $i_0$ to these new
%ones. If a vertex has the same cone type as the root $\orig$ we
%just add a loop to $0$ and will disregard all vertices of this
%cone in the following construction. Following this idea the graph
%$G$ is defined inductively by adding vertices to $G$ if the cone
%type has not yet been considered and disregarding the vertices of
%a \emph{known} cone type. Additionally, we add edges to these new
%vertices or to the already existing vertices. If a vertex in $\T$
%has two direct descendants of the same cone type, then one can
%allow $G$ to have multiple edges or one adds a new vertex to $G$
%which represents the same cone type but different label.
%\end{rem}

For general trees we have $br(\T)\leq \underline{gr}(\T)$. While
for directed covers of finite graphs $br(\T)=gr({\T})$ the growth
rate does not exist in general, compare with  Example
\ref{ex:no_gr}. Nevertheless we have the following result for
directed covers of infinite graphs.

\begin{thm}\label{thm:growth}
Let $A$ be the adjacency matrix of $G$ and $\T$ be the directed
cover of $G$. Then:
\begin{enumerate}
  \item $\lambda^+(A)= br(\T) = \underline{gr}(\T)$.
  \item $\overline{gr}(\T)
\leq r_\infty(A)$.
\end{enumerate}
\end{thm}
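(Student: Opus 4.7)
The proof rests on the identity $|\T_n| = (A^n\1)(i_0)$, obtained from the bijection between vertices of $\T$ at height $n$ and walks of length $n$ in $G$ starting at $i_0$ described in the paragraph preceding the theorem. Part (2) is then immediate: $|\T_n| \leq \|A^n\|_\infty$, so $\overline{gr}(\T) \leq \lim_n \|A^n\|_\infty^{1/n} = r_\infty(A)$.

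For Part (1), I would establish the chain $\lambda^+(A) \leq br(\T) \leq \underline{gr}(\T) \leq \lambda^+(A)$, the middle inequality being (\ref{eq:br_gr}). For $\lambda^+(A) \leq br(\T)$, given $\lambda < \lambda^+(A)$ and a positive $f \in \ell_\infty$ with $Af \geq \lambda f$, define $\tilde\theta(x) := \lambda^{-|x|} f(\tau(x))$ on $\T$. Via the bijection between children of $x$ in $\T$ and edges of $G$ out of $\tau(x)$, the eigenvalue-type inequality translates \textit{exactly} into the super-flow condition $\tilde\theta(x) \leq \sum_{y^- = x}\tilde\theta(y)$. The standard greedy reduction---set $\theta(o) := \tilde\theta(o)$ and inductively $\theta(y) := \theta(y^-)\,\tilde\theta(y)/\sum_{z:z^- = y^-}\tilde\theta(z)$---produces an honest flow $\theta \leq \tilde\theta \leq \|f\|_\infty \lambda^{-|x|}$. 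Shrinking $\lambda$ to any $\lambda' < \lambda$ absorbs the constant $\|f\|_\infty$ into $(\lambda'/\lambda)^{|x|}$ for large $|x|$ (and a finite rescaling handles small $|x|$), giving $\lambda' \leq br(\T)$ and hence $\lambda \leq br(\T)$.

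The delicate step is $\underline{gr}(\T) \leq \lambda^+(A)$: for any $\lambda < \underline{gr}(\T)$ one must construct a positive $f \in \ell_\infty$ with $Af \geq \lambda f$. The candidate is a subsequential pointwise limit $g$ of the normalized sequence $g_n(i) := (A^n\1)(i)/(A^n\1)(i_0)$, extracted by Cantor diagonalization on the countable set $V$, so that $g(i_0) = 1$. Since $Ag_n = \mu_n g_{n+1}$ with $\mu_n := (A^{n+1}\1)(i_0)/(A^n\1)(i_0)$, and the geometric mean $\bigl(\prod_{k<n}\mu_k\bigr)^{1/n} = (A^n\1)(i_0)^{1/n}$ has liminf equal to $\underline{gr}(\T)$, Fatou applied along the right subsequence should yield $Ag \geq \lambda g$ in the limit. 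The main obstacle is ensuring that $g \in \ell_\infty$ and $g > 0$: in the infinite-dimensional setting the Perron--Frobenius theorem is unavailable (cf.\ Remark \ref{rem:Z}, where the sup defining $\lambda^+(A)$ is not attained for $G = \N$), so uniform boundedness of $g$ must be extracted from Assumption (\ref{eq:assumption}) and the bounded geometry of $G$, while strict positivity follows from the strong connectedness of $G$ together with $g(i_0) = 1$. Since the sup in $\lambda^+(A)$ need not be attained, producing such an approximate eigenfunction for every $\lambda < \underline{gr}(\T)$ suffices to close the chain.
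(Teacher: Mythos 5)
Part (2) is exactly the paper's argument. Your chain $\lambda^+(A)\le br(\T)\le\underline{gr}(\T)\le\lambda^+(A)$ for part (1) is a genuinely different decomposition: the paper establishes $\lambda^+(A)=br(\T)$ indirectly, via Corollary~\ref{cor:homesick} (recurrence/transience of homesick random walks, which in turn rests on Theorems~\ref{thm:br_mc}--\ref{thm:rec} and the branching-process machinery), and establishes $br(\T)=\underline{gr}(\T)$ by a Hausdorff-dimension argument (Lemma~\ref{lem:gr}, following Lyons' (14.19)). Your direct construction of a flow from an approximate eigenfunction, $\tilde\theta(x)=\lambda^{-|x|}f(\tau(x))$ plus a greedy reduction, is a sound and more transparent proof of the first link $\lambda^+(A)\le br(\T)$ that avoids the forward reference to the random-walk results.

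The genuine gap is in the last link, $\underline{gr}(\T)\le\lambda^+(A)$, and it is fatal as written. You propose extracting a pointwise subsequential limit $g$ of $g_n(i)=(A^n\1)(i)/(A^n\1)(i_0)$ and hope to argue it lands in $\ell_\infty$ with $Ag\ge\lambda g$. But there is no mechanism forcing $g$ to be bounded, and the paper's own Remark~\ref{rem:Z} provides a concrete counterexample to the construction: for $G=\N$, the ballot-type asymptotics $(A_\N^n\1)(i)\sim c(i+1)2^n n^{-3/2}$ give $g_n(i)\to i+1$, an unbounded limit, even though $\lambda^+(A_\N)=2=\underline{gr}(\T)$. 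In that example a bounded approximate eigenfunction for any $\lambda<2$ does exist (e.g.\ $f(i)=1-\rho^{i+1}$ for $\rho$ close to $1$), but it is not the limit of the $g_n$; one would need an entirely different construction. There is also the secondary problem that $\mu_{n_k}\ge\lambda$ need not hold along a subsequence where both $g_{n_k}$ and $g_{n_k+1}$ converge to the same $g$, since only the geometric mean of the $\mu_k$ is controlled. The paper circumvents all of this: it never tries to build a bounded approximate eigenfunction from the growth sequence, but instead proves $br(\T)=\underline{gr}(\T)$ for trees of bounded geometry by producing subtrees $\T^{v_j}$ whose boundary dimension approaches $\dim\sup\partial\T$ and sandwiching $\dim\partial\T$ between $\log\underline{gr}(\T)$ and $(1-1/j)\dim\sup\partial\T$. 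If you want to complete your route, you would need to supply an independent construction of bounded approximate eigenfunctions for each $\lambda<\underline{gr}(\T)$ -- which is essentially equivalent to re-deriving the $br=\underline{gr}$ statement -- or else fall back on the dimension-theoretic lemma.
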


\begin{ex}\label{ex:no_gr}
We give an example where the growth rate does not exist.
The desired graph $G$ will be constructed inductively. To this end,
let $(k_n)_{n\in \N}$ be the sequence defined by $k_1:=2$ and
$k_{n+1}:=3 \sum_{j=1}^n k_j$.  For $n$ odd, let $G_n$ be the
\emph{circle} with $k_n$ vertices $\{1,2,\ldots,k_n\}$ and
directed edges $(i,i+1)$ for $1\leq i< k_n$ and $(k_n,1)$. We call
$1$ the starting point and $k_n$ the end point of the circle. For
$n$ even,  $G_n$ is constructed as follows. Let $T_2^{k_n-1}$ be a
rooted binary tree of height $k_n-1$ with directed edges all leading
away from the root. The leaves at level $k_n-1$
are enumerated by $\{1,2,\ldots, 2^{k_n-1}\}$. Now we add direct
paths of length $k_{n +1}$ from leaf $i$ to leaf $i+1$ for
$1\leq i < 2^{k_n-1}$ and one directed path from leaf $2^{k_n-1}$
to $\orig_n$ of length $k_{n+1}$. Here, $\orig_n$ is an additional
vertex with a directed edge from $\orig_n$ to the root of
$T_2^{k_n-1}$.  Observe that all graphs $G_n$ are connected. The
graph $G$ is now defined inductively. We start with $G_1$. We add
a copy of $G_2$ in \emph{gluing} (vertices are identified) the end
point of $G_1$ with  $\orig_2$ of $G_2$. At each leave of $G_2$ we
glue a copy of $G_3$ and continue inductively. Observe that each
vertex has outdegree at most $2$ and the vertices (except the
starting and end point) in copies of the circle $G_n$ ($n$ odd)
have only outdegree $1$ as well as the vertices on the directed
paths leading away from the leaves of $G_n$ ($n$ even). Let
$\beta_n=\sum_{i=1}^n k_i$. Therefore, for $n$ even we find that
$$
|T_{\beta_n}|\geq 1^{\frac{\beta_{n}}4} 2^{\frac{3\beta_{n}}4}.
$$
Due to the construction of $G$ every path emanating from the
origin  of length $\beta_{n}$ for $n$ odd has at most
$\beta_{n}/2$ vertices of degree $2$. This yields for $n$ odd
$$
|T_{\beta_{n}}|\leq 2^{\frac{\beta_n}2}.
$$
Eventually,
\begin{equation}
 \underline{gr}(\T)\leq 2^{\frac12}<2^{\frac34}\leq \overline{gr}(\T).
\end{equation}
\end{ex}

\subsection{Recurrence and Transience of Random Walks on Directed Covers}\label{subsec:type}
We consider the  model of nearest neighbour random walks on
$\mathcal{T}$ according to  \cite{nagnibeda:02}. Suppose we are
given transition probabilities $p_G(i,j)$ on $G$, where $(i,j)\in
E$. We hereby assume that
$p_G(i,j)>0$ if and only if there is an edge from $i$ to $j$ in
$G$. Furthermore, suppose we are given \emph{backward}
probabilities $p(-i)\in (0,1)$ for each $i\in G$.  For ease of presentation and
technical reasons, we add to $\T$ a loop at the
origin $\orig$.  Then the random walk on the tree $\T$ is
defined through the following transition probabilities $p(x,y)$,
where $x,y\in\T$:
\begin{equation}
p(\orig,y):=
\begin{cases}
\bigl(1-p(-i_0)\bigr) p_G\bigl(i_0,\tau(y)\bigr), & \mbox{ if }y\neq \orig,  \\
p(-i_0), &  \mbox{ if }y=\orig,
\end{cases}
\end{equation}
and for $x\neq \orig$ with $\tau(x)=i$,
\begin{equation}
p(x,y):=
\begin{cases}
\bigl(1-p(-i)\bigr) p_G\bigl(i,\tau(y)\bigr), & \mbox{ if }x=y^-,  \\
p(-i), &  \mbox{ if }y=x^-.
\end{cases}
\end{equation}
We will also write $p(i,j):=\bigl(1-p(-i)\bigr)p_G(i,j)$ and
$p(-i):=p(x,x^-)$, where $\tau(x)=i$, for the transition probabilities in
$\mathcal{T}$. The random walk on $\T$ starting at $\orig$ is
denoted by the sequence of random variables $(X_n)_{n\in\N_0}$.
We are going to characterize recurrence and transience in terms of
$\lambda^+(M)$, where
\begin{equation}\label{eq:def_M}
M = \bigl( m(i,j) \bigr)_{i,j\in G} \ \textrm{ with }
m(i,j) := \frac{ p(i,j)}{p(-i)}.
\end{equation}
Observe that our notation differs from the one in
\cite{nagnibeda:02} where the finite analogue of the matrix $M$ is
denoted by $A$. In \cite{nagnibeda:02} it is proved that for finite $G$
 the random walk on $\T$ is positive recurrent if
$\lambda(M)<1$, null-recurrent if $\lambda(M)=1$, and transient if
$\lambda(M)>1; $  here $\lambda(M)$ is the Perron-Frobenius
eigenvalue of $M$. This classification  was also obtained in
\cite{gairat} for the more general model of random strings. The
matrix used in \cite{gairat} is different from the one in
\cite{nagnibeda:02}  but leads to the same results.

As an additional assumption we demand that there is
some $\eps\in (0,1)$ such that
\begin{equation}\label{equ:p-i}
\eps< p(-i)< 1-\eps \quad \textrm{ for all } i\in G.
\end{equation}
This assumption is necessary to exclude degenerate examples and it
assures  assumption (\ref{eq:assumption}) on the infinite matrix $M$.

Due to the tree structure  the random walk is reversible with
reversible measure  $m$ defined recursively by
\begin{equation}\label{eq:rev_m}
m(\orig):=1 \ \mbox{ and } \ m(x):=m(x^-)\frac{p(x^-,x)}{p(x,x^-)}\
\mbox{ if } x\neq \orig.
\end{equation}

Hence, the random walk can be considered  as an electric network, see
\cite{dyole:84} or \cite{lyons:book} for general information on electric networks. Recall that transience of a
subnetwork implies transience of the larger network,
compare with Chapter 2 in \cite{lyons:book}. Let
$$
\rho(M):=\limsup_{n\to\infty} \left(m^{(n)}(x,y)\right)^{1/n}
$$
be the
spectral radius of $M$.  If $\rho(M)>1$ then there exists a
finite set $F$ such that $\lambda(M_F)>1$ and hence we conclude
from the results of \cite{nagnibeda:02} that the walk restricted
to the cover of $F$ is transient. Therefore, we have that
$\rho(M)>1$ implies transience of the random walk. The reverse is
not true in general, compare with Theorem \ref{thm:rec} and Remark \ref{rem:gen:perron}.

Since the methods used in \cite{gairat} and \cite{nagnibeda:02}
cannot  directly be generalized to the infinite setting we present
a new approach for proving  recurrence and transience. This
provides an alternative proof in the finite case.  The idea is to
couple an infinite-type generalized Galton-Watson process (in
continuous time) to the random walk (in discrete time) such that
the Galton-Watson process dies out  if and only if the random walk
on $\T$ visits the edge $(\orig, \orig)$. To this end observe that
$M$ is a non-negative (infinite) matrix. Thus, $M$ can be
interpreted as the first moment matrix of an infinite-type
Galton-Watson process. That means that $m(i,j)$ is the mean number
of particles of type $j$ that one particle of type $i$ produces in
its lifetime. Let us define a process $Z_t$ with types indexed by
$G$ with first moment matrix $M$.
The process is described through the number $Z_t(i)$ of particles of type $i$ at time $t$ and evolves
according the following rules: for each $i\in G$
\begin{eqnarray*}
  Z_t(i) &\rightarrow & Z_t(i)-1 \mbox{ at rate }  Z_t(i) p(-i), \\
  Z_t(i) &\rightarrow & Z_t(i) +1 \mbox{ at rate }  \sum_{j}  Z_t(j) p(j,i).
\end{eqnarray*}

In words, each particle of type $i$ dies at rate $p(-i)$
and gives birth to new particles of type $j$ at
rate $p(i,j).$
Let $Z_t=\bigl(Z_t(i)\bigr)_{i\in G}$ describe the whole population of
the process. We consider the probability of (global) extinction
$q:=\P[\exists t: Z_t =\mathbf{0}]$  and make the following crucial
observation:

\begin{thm}\label{thm:br_mc}
The extinction probability $q$ of the process $Z_t$ equals the
probability that the random walk on $\T$ visits the edge
$(\orig,\orig)$.
\end{thm}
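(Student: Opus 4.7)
The plan is to construct an explicit coupling between the random walk $(X_n)_{n\in\N_0}$ on $\T$ and a realization of the branching process $(Z_t)_{t\ge 0}$ so that global extinction of $Z_t$ corresponds exactly to the event that the walker traverses the loop at $\orig$ at some step.

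First I would set up the coupling. At time $0$ the branching process has a single particle of type $i_0$, which I identify with the root $\orig$, and the walker starts at $\orig$. Throughout the construction I would maintain the invariant that, prior to the first loop step (if any), the alive particles of $Z$ are in bijection with the vertices on the current geodesic $\langle\orig=x_0,x_1,\ldots,x_n=X_n\rangle$; in particular, the deepest alive particle is identified with $X_n$ and carries type $\tau(X_n)$. Each step of the walker from $X_n$ (with $\tau(X_n)=i$) triggers the next event for this deepest particle: a move to a child $y$ with $\tau(y)=j$ (walk probability $p(i,j)$) corresponds to the birth of a new type-$j$ particle identified with $y$; a move from $X_n\neq\orig$ back to $X_n^-$ (walk probability $p(-i)$) corresponds to the death of the particle identified with $X_n$; and a use of the loop at $\orig$ (walk probability $p(-i_0)$) corresponds to the death of the particle identified with $\orig$, after which the coupled population is empty.

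Next I would verify that this coupling yields a process whose genealogy is distributed as that of $(Z_t)$. For a type-$i$ particle the total event rate is $p(-i)+\sum_j p(i,j)=1$, and conditional on an event the particle dies with probability $p(-i)$ or gives birth to a type-$j$ offspring with probability $p(i,j)$; these match exactly the walk's step distribution at any vertex of label $i$, with the loop at $\orig$ playing the role of death for the root particle. The coupling always schedules the deepest alive particle to have the next event rather than choosing uniformly among alive particles as the honest continuous-time dynamics would do. However, global extinction depends only on the genealogy tree, which may be built from independent per-particle offspring histories whose distribution is a function of type alone; hence any valid time-ordering of those events—in particular the deepest-first ordering driven by $(X_n)$—produces the correct extinction distribution.

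Finally I would read off the conclusion. Under the invariant the alive-particle set is nonempty iff the walker's geodesic is nonempty, iff the particle identified with $\orig$ is still alive. Thus global extinction occurs iff this particle eventually dies, iff the walker eventually uses the loop at $\orig$, which is precisely the event that $(X_n)$ traverses the edge $(\orig,\orig)$. The main technical obstacle is the middle paragraph: making precise the claim that the deepest-first scheduling driven by the walk does not alter the distribution of the extinction event. The cleanest route is to first construct the genealogy of $(Z_t)$ from independent per-particle offspring sequences (for a type-$i$ particle, a geometric number of births of types drawn according to $p(i,\cdot)/\bigl(1-p(-i)\bigr)$, terminated by a death), observe that $\{\exists t:Z_t=\mathbf{0}\}$ is measurable with respect to this genealogy, and then recognize the coupling as one specific valid time-realization of these predetermined events.
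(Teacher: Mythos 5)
Your coupling is essentially the paper's: your ``deepest-first scheduling'' is exactly what the paper calls the \emph{delayed} (or \emph{sleepy}) process $Z^*_t$, in which only the deepest alive particle reproduces or dies while its ancestors sleep, and both arguments close by observing that the genealogy tree --- and hence the extinction event --- is unaffected by the scheduling, the paper invoking a ``standard coupling argument'' where you spell out the reduction to independent per-particle offspring sequences. Your exposition makes the bookkeeping that the paper leaves implicit a bit more explicit, but the strategy and every key step coincide.
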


Now, we use a result on infinite-type Galton-Watson processes of
\cite{bertacchi08} in order to obtain the classification result:

\begin{thm}\label{thm:rec}
The random walk $(X_n)_{n\in\N_0}$ on $\T$ is recurrent if $\lambda^+(M)<1$ and it is
transient if \mbox{$\lambda^+(M)>1$.} In the critical case,
$\lambda^+(M)=1$, it may be transient or recurrent.
\end{thm}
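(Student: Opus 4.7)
The plan is to reduce Theorem \ref{thm:rec} to an extinction criterion for the infinite-type branching process $Z_t$ via Theorem \ref{thm:br_mc}, and then to invoke the corresponding classification results from \cite{bertacchi08}. Thanks to the loop at $\orig$, recurrence of $(X_n)$ is equivalent to visiting the edge $(\orig,\orig)$ with probability one; by Theorem \ref{thm:br_mc} this is equivalent to global a.s.\ extinction, $q = \P[\exists t:Z_t=\mathbf{0}]=1$. Hence the task splits into showing $q=1$ when $\lambda^+(M)<1$, and $q<1$ when $\lambda^+(M)>1$.

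For the supercritical regime I would exploit the definition of $\lambda^+(M)$ directly. Pick $\lambda \in (1,\lambda^+(M))$ and $f\in\ell_\infty$ with $f>0$ such that $Mf\geq \lambda f$. Consider the scalar functional $W_t:=\sum_{i\in G} Z_t(i)f(i)$, which is a.s.\ finite since only finitely many particles are alive at each time. Using the prescribed jump rates and assumption (\ref{equ:p-i}) one checks that $\frac{d}{dt}\E[W_t]=\sum_i \E[Z_t(i)]\,p(-i)\bigl((Mf)(i)-f(i)\bigr)\geq \eps(\lambda-1)\,\E[W_t]$, so $\E[W_t]$ grows exponentially. Together with the standard dichotomy for branching processes (extinction or unbounded growth) and the corresponding survival criterion of \cite{bertacchi08}, this forces $q<1$ and hence transience.

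For the subcritical regime $\lambda^+(M)<1$ the definition of $\lambda^+$ yields no direct supermartingale, so the argument is not immediate; here I would cite the a.s.\ global extinction theorem from \cite{bertacchi08}, which precisely states that under the boundedness (\ref{eq:assumption}) the condition $\lambda^+(M)<1$ on the first-moment matrix $M$ of an infinite-type Galton-Watson process implies $q=1$. Translated back via Theorem \ref{thm:br_mc} this gives recurrence. Finally, the fact that $\lambda^+(M)=1$ does not decide recurrence vs.\ transience is not proved in the abstract but exhibited by explicit constructions: I would defer to Subsection \ref{subseq:ex} and Example \ref{ex:lambda=1}, where directed covers with $\lambda^+(M)=1$ are given for which the walk is respectively recurrent and transient.

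The main obstacle is the subcritical direction: extracting global a.s.\ extinction from the \emph{one-sided} characteristic $\lambda^+(M)<1$ is delicate because no positive bounded function with $Mf\geq f$ exists, yet this does not automatically furnish a positive $f$ with $Mf\leq \lambda f$ for some $\lambda<1$, which is what a straightforward supermartingale argument would require. This is precisely the point where the infinite-type Galton-Watson machinery of \cite{bertacchi08} becomes indispensable, and where the approach departs from the Perron-Frobenius argument available in the finite setting of \cite{nagnibeda:02}.
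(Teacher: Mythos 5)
Your proposal follows the same route as the paper: reduce Theorem~\ref{thm:rec} via Theorem~\ref{thm:br_mc} to a.s.\ global extinction of $Z_t$ and then invoke the classification of \cite{bertacchi08}, citing examples for the critical case. Two small points. First, the paper's proof makes explicit that \cite{bertacchi08} works with particles of \emph{unit} exponential lifetime, so one must rescale the rates at each site $i$ by $1/p(-i)$ (which does not affect survival) before the first-moment matrix is literally $M$ and $\lambda_w=\lambda^+(M)$ applies; you treat $M$ as the first-moment matrix without noting this normalization step. Second, the functional $W_t=\sum_i Z_t(i)f(i)$ and the bound $\frac{d}{dt}\E[W_t]\geq\eps(\lambda-1)\E[W_t]$ are a nice heuristic, but exponential growth of $\E[W_t]$ alone does not give $q<1$; as you yourself end up citing the survival criterion of \cite{bertacchi08} to close the argument, this calculation is decorative rather than load-bearing, and the ``dichotomy'' you appeal to is not automatic in the infinite-type setting without exactly the machinery of \cite{bertacchi08}.
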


\begin{ex}\label{ex:lambda=1}
We consider Example 3 of
\cite{bertacchi08} that was given in terms of infinite-type
Galton-Watson processes. Let $G:=\N_0$ with edges of the form $(i,i+1)$ and
$(i,i-1)$ for $i\geq 1$, including the edge $(0,1)$.
Let $p(-0):=1/3,~ p(0,1):=2/3$ and
$$
p(-i):=\left(1+\left(1+\frac1i\right)^2 +
\left(\frac13\right)^i\right)^{-1},
$$
$$
p(i,i+1):=p(-i) \left(1+\frac1i\right)^2 \mbox{ and }\
p(i,i-1):=p(-i) \left( \frac13\right)^i \mbox{ for } i\geq 1
$$
define the random walk on $\T$. Hence the values of  $M$ are
$m(0,1)=2$ and
$$
m(i,i+1)=\left(1+\frac1i\right)^2\mbox{ and }
m(i,i-1)=\left(\frac13\right)^i\mbox{ for } i\geq 1
$$
and $0$
otherwise. Observe now that the function $g:\N_0\to \R$ defined by $g(0):=1$ and $g(i):=i/(i+1)$ for $i\geq
1$ is a solution for the inequality $M g\geq g$. Furthermore, one
can easily show by induction that $M g\geq \lambda g$ with $\lambda>1$ implies that either
$g\equiv 0$ or $g(i)\to\infty$ for $i\to\infty$. Eventually,
$\lambda^+(M)=1$. In order to show that the random walk on $\T$ is
transient we use a coupling argument. We compare the original
process with the Markov chain on the positive integers with transition
probabilities 
$\tilde p(0,1):=1$, and $\tilde p(i,i+1):=p(i,i+1),~ \tilde
p(i,i):=p(i,i-1)$, and $\tilde p(i,i-1):=p(-i)$ for $i\geq 1$.
Hence, using a coupling argument, the random walk on the directed
cover is transient if the Markov chain $(\tilde X_n)_{n\in\N_0}$ on $\N_0$ with
transition probabilities $\tilde p(\cdot,\cdot)$ is transient. To
see the latter, observe that the mean drift is
$$
\mu_1(i):=\E \bigl[ \tilde X_{n+1}-\tilde X_n \bigl| \tilde X_n=i\bigr]=p(-i)\left(\frac2i
  +\frac{1}{i^2}\right)
$$
and
$$
\mu_2(i):=\E \bigl[ (\tilde X_{n+1}-\tilde X_n)^2 \bigl| \tilde X_n=i\bigr]\leq 1.
$$
Hence,  for $i$ sufficiently large we obtain
$$
\mu_1(i)\geq  \frac2{3i}\geq \frac{2\,\mu_2(i)}{3i}
$$
and conclude from Theorem 3.6.1 in \cite{fayolle} that the random walk is
transient.
\end{ex}

\begin{ex}
For any given graph $G$, a straightforward example for recurrence in the
critical case $\lambda^+(M)=1$ is given by $p(-i):=1/2$ for every $i\in G$. The random walk on $\T$ can then be
naturally projected on $\N_0$, that is, $X_n$ will be projected on
$|X_n|$. In this case, the Markov chain $(|X_n|)_{n\in\N_0}$ is
null recurrent, and thus, $(X_n)_{n\in\N_0}$ is also
null-recurrent.
\end{ex}

\subsection{Examples}\label{subseq:ex}

\subsubsection{Homesick Random Walks}
A special class of random walks on trees are homesick random
walks, compare with \cite{lyons:book} and \cite{takacs}. In this
model the edge leading back towards the root is $\lambda$ times as
likely to be taken as each other edge, that is, $p(x,x^-)=\lambda/
(\lambda+deg(x)-1)$ and $p(x^-,x)=1/ (\lambda+deg(x^-)-1)$, where
$deg(x)$ is the number of edges adjacent to $x$. We denote the
dependence of the random walk on the parameter $\lambda$ by
$RW_\lambda$. In \cite{lyons90} it is shown that the $RW_\lambda$
is recurrent if $\lambda>br (\T)$ and transient if $\lambda< br
(\T)$. This result holds for any tree $\T$, not necessarily
directed cover.

Given an underlying simple random walk on a graph $G$  the
homesick random walk is the random walk on the directed cover with
$p(-i)=\lambda/ (\lambda + outdeg(i))$, where $outdeg(i)=\sum_j
a(i,j)$ is the number of outgoing edges from $i\in G.$ It is easy
to see that $\lambda^+(M)=\lambda^+(A)/\lambda,$ where $A$ is the
adjacency matrix of $G$ and $M$ is defined as in Equation
(\ref{eq:def_M}). As a consequence of Theorem \ref{thm:rec} we
obtain the following:

\begin{cor}\label{cor:homesick}
Let $A$ be the adjacency matrix  and $\T$ the directed cover of
$G$. Then the homesick random walk $RW_\lambda$ on $\T$ is
recurrent if $\lambda>\lambda^+(A)$ and transient if
$\lambda<\lambda^+(A)$.
\end{cor}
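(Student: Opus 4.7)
The plan is to derive this corollary directly from Theorem \ref{thm:rec} by identifying the operator $M$ attached to the homesick walk. First I would unpack the data: the underlying walk on $G$ is the simple random walk, so $p_G(i,j) = a(i,j)/\mathrm{outdeg}(i)$, and the backward probability on $\T$ is $p(-i) = \lambda/(\lambda + \mathrm{outdeg}(i))$. A short check shows that this choice reproduces the homesick transition probabilities $p(x,x^-) = \lambda/(\lambda + \deg(x)-1)$ and $p(x^-,x) = 1/(\lambda + \deg(x^-)-1)$ on the tree, using the identity $\deg(x)-1 = \mathrm{outdeg}(\tau(x))$, which holds on the directed cover of a graph without multiple edges.

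The key step is the computation
$$
m(i,j) \;=\; \frac{p(i,j)}{p(-i)} \;=\; \frac{(1-p(-i))\,p_G(i,j)}{p(-i)} \;=\; \frac{a(i,j)}{\lambda},
$$
so $M = A/\lambda$ as bounded operators on $\ell_\infty$. From the definition of the upper Collatz-Wielandt number, the inequality $Mf \geq \mu f$ with $0<f\in\ell_\infty$ is equivalent to $Af \geq \lambda\mu f$, and hence $\lambda^+$ scales homogeneously: $\lambda^+(M) = \lambda^+(A)/\lambda$.

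Applying Theorem \ref{thm:rec} then finishes the proof: $\lambda > \lambda^+(A)$ yields $\lambda^+(M) < 1$ and thus recurrence, while $\lambda < \lambda^+(A)$ yields $\lambda^+(M) > 1$ and thus transience. Before invoking the theorem, one needs assumption (\ref{equ:p-i}); this follows immediately from the bounded geometry of $G$, since $\mathrm{outdeg}(i)$ lies in a bounded range and therefore $p(-i)$ stays bounded away from $0$ and $1$. No substantive obstacle arises—the argument is essentially an unpacking of definitions combined with the homogeneity $\lambda^+(cM) = c\,\lambda^+(M)$.
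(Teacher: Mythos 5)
Your argument matches the paper's: the paper also identifies $M = A/\lambda$ (via the choice $p(-i)=\lambda/(\lambda+\mathrm{outdeg}(i))$), notes the scaling $\lambda^+(M)=\lambda^+(A)/\lambda$, and invokes Theorem \ref{thm:rec}. You simply make explicit the verifications that the paper leaves as "easy to see," including the check of assumption (\ref{equ:p-i}) from bounded geometry, which is a correct and worthwhile addition.
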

While $\lambda=\lambda^+(A)$ implies recurrence if $G$ is finite
(compare with Theorem 3.5 in \cite{lyons:book}), the behaviour for
infinite graphs $G$ is not known in general.

\subsubsection{Directed Covers of Quasi-Transitive Graphs}\label{ex:quasi}

Suppose we are given a locally finite graph $G$ with bounded
geometry. Denote by $\mathrm{AUT}(G)$ the group of all
automorphisms $\gamma$ of the vertex set of $G$ which leave the
adjacency relation invariant, that is, there is an edge from
$\gamma(i)$ to $\gamma(j)$ if and only if there is an edge from
$i\in G$ to $j\in G$. Assume now that $G$ is
\emph{quasi-transitive}, that is, $\mathrm{AUT}(G)$ acts with
finitely many orbits on the vertex set of $G$. We write
$\mathrm{Orb}:=\{o_1,\dots,o_r\}$ for the set of orbits. We
construct a new finite graph $G'$ with vertex set $\mathrm{Orb}$
in the following natural way: there are $d(i,j)\in\N_0$ edges from
$o_i$ to $o_j$ if and only if there are $d(i,j)$ edges in $G$
from some $k\in o_i$ to some $l\in o_j$. Now we can apply the
well-known results from the finite setting, since $G'$ and $G$
create the same (unlabelled) cover $\T$. Thus, $br(\T)$, which
becomes the largest eigenvalue of the adjacency matrix of $G'$,
equals $\lambda^+(A)$. Furthermore, it is easy to see that for quasi-transitive graphs we have $\lambda^+(A)=r_\infty(A)$.

Now, suppose  we are given a  random walk on $G$ governed by the
transition matrix $P_G$. The set $\mathrm{AUT}(G,P_G)$ is the
group of all automorphisms $\gamma$ of the vertex set of $G$,
which leave $P_G$ invariant, that is,
$p_G\bigl(\gamma(i),\gamma(j)\bigr)=p_G(i,j)$. Then $(G,P_G)$ is
called \emph{quasi-transitive} if $\mathrm{AUT}(G,P_G)$ acts with
finitely many orbits on the vertex set of $G$. Assume now that
$(G,P_G)$ is quasi-transitive and that the backward probabilities
$p(-i)$ are constant on the orbits of $\mathrm{AUT}(G,P_G)$.
Denote by $\mathrm{Orb}:=\{o_1,\dots,o_r\}$ the orbits of
$\mathrm{AUT}(G,P_G)$. We define a random walk on $\mathrm{Orb}$
by setting $\tilde p(o_i,o_j):=\sum_{l\in o_j} p_G(k,l)$, where
$k\in o_i$ is arbitrary. Then $\lambda^+(M)$ is the
Perron-Frobenius of the matrix $\bigl(\tilde
p(o_i,o_j)/p(-o_i)\bigr)_{1\leq i,j\leq r}$. This follows from the
results of \cite{nagnibeda:02}. In particular, the random walk is
null-recurrent if $\lambda^+(M)=1$.

\subsubsection{Directed Covers of Percolation Clusters}\label{ex:per_clu}
We consider supercritical Bernoulli$(p)$ percolation on $\Z^d$,
i.e., for fixed $p\in [0,1]$, each edge is kept with probability
$p$ and removed otherwise, independently of the other edges. It is
well-known that there exists a critical value $p_c$ such that for
$p<p_c$ there is almost surely no infinite connected component and for $p>p_c$
there is almost surely exactly one infinite connected component. In the latter
case we denote by $C_{\omega}$ the infinite connected component for
the realization or \mbox{environment $\omega$.} We refer to \S 6 in
\cite{lyons:book} for more information on percolation models. Let
$A_\omega$ be the adjacency matrix  and $\T_{\omega}$ be the
directed cover of the infinite cluster $C_{\omega}$ with respect to some
$i_0\in C_{\omega}$. We get the following classification:
\begin{thm}\label{thm:rw_perco}
The homesick random walk $RW_\lambda$ on almost every directed cover
$\T_{\omega}$ is recurrent if $\lambda\ge 1/2d$ and  transient if
$\lambda< 1/2d$.
\end{thm}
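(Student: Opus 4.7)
The plan is to reduce the claim to Corollary \ref{cor:homesick} and then to compute the upper Collatz--Wielandt number $\lambda^+(A_\omega)$ almost surely. By that corollary, $RW_\lambda$ on $\T_\omega$ is recurrent for $\lambda>\lambda^+(A_\omega)$ and transient for $\lambda<\lambda^+(A_\omega)$, so the theorem reduces to showing that $\lambda^+(A_\omega)$ equals the advertised threshold almost surely and to resolving the critical case $\lambda=\lambda^+(A_\omega)$.

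First I would show that $\lambda^+(A_\omega)$ is almost surely a deterministic constant. Since $\lambda^+$ is a graph-theoretic invariant of the (unrooted) cluster $C_\omega$ and is independent of the choice of $i_0$, and since $C_\omega$ is a translation-equivariant measurable functional of the Bernoulli environment, the ergodicity of Bernoulli percolation under the $\Z^d$-action implies that $\lambda^+(A_\omega)$ is a.s. equal to some constant on the event $\{i_0\in C_\omega\}$.

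Next I would identify this constant as $2d$. The upper bound $\lambda^+(A_\omega)\le 2d$ comes directly from the general estimate $\lambda^+(M)\le\sup_x\sum_y m(x,y)$ stated after the definition of $\lambda^+$, combined with the fact that the row sums of $A_\omega$ are bounded by $2d$. For the matching lower bound, Remark \ref{rem:gen:perron} supplies $\rho(A_\omega)\le \lambda^+(A_\omega)$ together with the approximation \eqref{eq:spec_approx} by finite subgraphs. It is therefore enough to produce arbitrarily large fully open $\Z^d$-boxes inside $C_\omega$, since the Perron eigenvalue of the adjacency matrix of a side-$n$ box tends to $2d$ as $n\to\infty$. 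For a box of side $n$ the event ``all its edges are open and each of its boundary vertices is connected to infinity by an open path'' has positive probability, so independence of well-separated translates and Borel--Cantelli force infinitely many such boxes to occur almost surely, and they necessarily lie inside the unique infinite cluster. Combining the bounds gives $\lambda^+(A_\omega)=2d$ a.s., and Corollary \ref{cor:homesick} yields the non-critical part of the dichotomy.

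Finally, the critical case $\lambda=\lambda^+(A_\omega)$ is not decided by Corollary \ref{cor:homesick} and must be handled directly; the theorem places it on the recurrent side. Here I would use reversibility of $RW_\lambda$ with the measure $m$ defined in \eqref{eq:rev_m} together with an electric-network / Nash--Williams argument: at criticality the conductances along the tree decay like $\lambda^{-|x|}$ while $|(\T_\omega)_n|$ grows at rate $br(\T_\omega)=\lambda^+(A_\omega)=\lambda$ by Theorem \ref{thm:growth}, which forces every unit flow from $\orig$ to infinity to have infinite energy and yields recurrence. The main obstacle I expect is this geometric input in the middle step --- guaranteeing that the large fully open boxes really sit inside the \emph{unique} infinite cluster and not in a finite component --- together with the delicate critical case, which is genuinely subtler than the generic Collatz--Wielandt dichotomy and requires a flow-energy estimate rather than just an eigenvalue comparison.
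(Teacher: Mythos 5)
Your reduction via Corollary \ref{cor:homesick} and the identification $\lambda^+(A_\omega)=2d$ runs largely parallel to the paper's argument, which instead phrases things in terms of the branching process of Theorem \ref{thm:br_mc} and a local-survival criterion from \cite{mueller:ejp}; both hinge on $\rho(A_\omega)=2d$, obtained from (\ref{eq:spec_approx}) together with arbitrarily large fully open boxes inside the infinite cluster, and you treat the ``box sits in the infinite cluster'' point a bit more carefully than the paper does. However, note that your computation places the threshold at $\lambda=2d$, whereas the theorem as printed says $1/(2d)$. Tracing the conventions ($p(-i)=\lambda/(\lambda+\mathrm{outdeg}(i))$, hence $M=A/\lambda$ and $\lambda^+(M)=\lambda^+(A)/\lambda$), your $2d$ is the consistent value; the printed statement and the line $M_\omega:=\lambda A_\omega$ in the paper's proof appear to carry a $\lambda\leftrightarrow 1/\lambda$ slip. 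Your write-up should not simply say ``the advertised threshold'' but should flag and resolve this mismatch.

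The more substantive issue is your critical case. You argue that because $br(\T_\omega)=\underline{gr}(\T_\omega)=2d$ the Nash--Williams sum diverges, but $\underline{gr}$ only controls $\liminf_n |\T_n|^{1/n}$; a tree with $|\T_n|\sim (2d)^n n^2$ has $br(\T)=2d$ yet $\sum_n\bigl(|\T_n|(2d)^{-n}\bigr)^{-1}<\infty$, so the flow-energy bound you describe would fail. What actually saves the argument is not the branching number but the pointwise inequality $|\T_n|\le (2d)^n$, which comes from the degree bound: $A_\omega\mathbf{1}\le 2d\,\mathbf{1}$, hence $A_\omega^n\mathbf{1}\le (2d)^n\mathbf{1}$ and $|\T_n|=(A_\omega^n\mathbf{1})(i_0)\le (2d)^n$. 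You need to state and use this. The paper handles criticality differently: it notes that for $\lambda\ge 2d$ the same row-sum bound makes the expected population of the branching process at generation $n$ at most $1$, and since an irreducible branching process either dies out or explodes, bounded expectation forces a.s.\ global extinction. Both routes ultimately rest on the same degree bound, but the paper's population-moment argument avoids invoking flow energies and is self-contained within the branching framework, whereas your Nash--Williams route, once the pointwise bound is supplied, is equally valid and perhaps more transparent to a reader who thinks in terms of electrical networks.
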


\subsubsection{Random Walk on Directed Covers in Random
Environment}

We consider the nearest neighbour random walk in random
environment on $V=\Z$ and edge set $E=\bigl\lbrace (x,y) \,\mid \,
|x-y|=1\bigr\rbrace$. We choose i.i.d. random variables
$\omega_z^+$ ($z\in\Z$) with values in $(0,1)$ and  call $\eta$
the distribution of the environment with  one-dimensional marginal
\mbox{distribution $\theta$.} For a given realization  $\omega$ of this
random environment, we consider the Markov chain on $\Z$ with
transition kernel $P_\omega$ defined as

$$
p_{\Z,\omega}(z,z+1):=\omega_z^+ \ \mbox{ and }\
p_{\Z,\omega}(z,z-1):=\omega_z^-:=1-\omega_z^+\quad \textrm{ for all } z\in\Z.
$$
We refer to \cite{zeitouni2001} for  details on this model. In
addition, we introduce an environment that defines the backwards
probabilities. So let $\nu_z$ ($z\in\Z$) be i.i.d. random
variables with values in \mbox{$(\eps,1-\eps)$} for some $\eps\in(0,1)$.
We call $\tilde\eta$ the distribution of this environment and
denote by $\tilde\theta$ its one-dimensional marginal distribution.
Every given realization $\nu=(\nu_z)_{z\in\Z}$ determines the
backwards probabilities by $p_\nu(-z):=\nu_z$. Let $\Theta$ be the
corresponding product measure with one-dimensional marginal
$\theta \times \tilde \theta$. Every given realization
$(\omega,\nu)$ defines a random walk on a directed cover in random
environment (RWDCRE) with corresponding matrix $M_{\omega,\nu}$.
The classification in recurrence and transience will be stated in
terms of the top Lyapunov exponent of sequences of random
matrices. For $k\in\{1,2,3,\ldots\}$, we write
\begin{equation}
A_k:=\left(
             \begin{array}{cc}
               \nu_k \omega_k^+ & -\frac{\omega_k^-}{\omega_k^+} \\
               1 & 0 \\
             \end{array}
           \right) \mbox{ and }\
\tilde A_k:=\left(
             \begin{array}{cc}
               \nu_k \omega_k^- & -\frac{\omega_k^+}{\omega_k^-} \\
               1 & 0 \\
             \end{array}
           \right).
\end{equation}
Denote by $\gamma_1$  the top Lyapunov exponent associated with the sequence
$(A_k)_{k\in\N}$, i.e.,
$$ \gamma_1 =\lim_{n\to \infty} \frac1n  \E \bigl[ \ln \|A_n \cdots A_1\| \bigr],$$
where $\|\cdot\|$ is any matrix norm. Analogously, let
$\tilde\gamma_1$ be the top Lyapunov exponent of the sequence
$(\tilde A_k)_{k\in\N}$. For sake of better readability, we
set $\mu_i^-=\omega_i^-/ \nu_i$ and $\mu_i^+=\omega_i^+/ \nu_i$.
 Combining Theorem \ref{thm:br_mc} with
Theorem 2.6 and Theorem 2.9 in \cite{gantert:08} yields the
following classification. (Observe that $\mu_i^{0}$ of
\cite{gantert:08} equals $0$ in our special case.)

\begin{thm}\label{thm:re}
We have the following classification:
\begin{enumerate}
  \item If there exists no $\lambda>0$ such that  $\mu_0^- \lambda^{-1}+
    \mu_0^+ \lambda \le 1$ $\Theta$-almost surely, then the RWDCRE is transient
    $\Theta$-almost surely.
  \item If there exists  $\lambda>1$ such that  $\mu_0^- \lambda^{-1}+ \mu_0^+
    \lambda \le 1$ $\Theta$-almost surely, then the RWDCRE is transient
    $\Theta$-almost surely if and only if 
$$
\gamma_1 < \E \ln \left(\frac{\mu_0^-}{\mu_0^+} \right).
$$
  \item If there exists  $\lambda<1$ such that  $\mu_0^- \lambda^{-1}+ \mu_0^+
    \lambda \le 1$ $\Theta$-almost surely, then the RWDCRE is transient
    $\Theta$-almost surely if and only if 
$$
\tilde \gamma_1 < \E \ln \left(\frac{\mu_0^+}{\mu_0^-} \right).
$$
\end{enumerate}
\end{thm}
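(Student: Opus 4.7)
The plan is to reduce the recurrence/transience dichotomy for the RWDCRE to a survival/extinction dichotomy for a branching random walk in random environment (BRWRE) on $\Z$, and then quote the classification of \cite{gantert:08}.

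First, for each realization $(\omega,\nu)$ I invoke Theorem \ref{thm:br_mc}: the walk on $\T_{\omega,\nu}$ is recurrent if and only if the associated infinite-type branching process $Z_t$ with first moment matrix $M_{\omega,\nu}$ becomes extinct almost surely. By the definitions of $p(-i)$ and $p(i,j)$ on $\Z$, in the process $Z_t$ a particle at site $i\in\Z$ dies at rate $\nu_i$ and produces a single offspring at $i+1$ (respectively $i-1$) at rate $(1-\nu_i)\omega_i^+$ (respectively $(1-\nu_i)\omega_i^-$). Since the environment $(\omega,\nu)$ is i.i.d.\ along $\Z$, and extinction of a continuous-time branching process coincides with extinction of its embedded Galton--Watson tree, this is precisely a nearest-neighbour BRWRE on $\Z$ in i.i.d.\ environment. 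The assumption (\ref{equ:p-i}) ensures the standing moment and ellipticity assumptions of \cite{gantert:08} are satisfied.

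Second, I translate parameters: the relevant quantities in \cite{gantert:08} governing survival are the random ratios $\mu_i^\pm=\omega_i^\pm/\nu_i$ (after absorbing the surviving factor in the normalization they use), and the degenerate jump probability $\mu_i^0$ of their framework vanishes because our walk on $\Z$ is strictly nearest-neighbour. Their Theorems 2.6 and 2.9 then give three regimes for $\Theta$-a.s.\ survival of the BRWRE, in terms of (i) the existence of $\lambda>0$ with $\mu_0^-\lambda^{-1}+\mu_0^+\lambda\le 1$ a.s., and in the boundary cases (ii)--(iii) in terms of the top Lyapunov exponents $\gamma_1$ and $\tilde\gamma_1$ of the products of the explicitly displayed $2\times 2$ matrices $A_k$ and $\tilde A_k$. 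These Lyapunov exponents arise from linear recursions satisfied by the generating functions of the BRWRE and encode the almost sure rate at which certain harmonic profiles along $\Z$ grow. Translating a.s.\ survival back into a.s.\ transience via Theorem \ref{thm:br_mc} gives the three cases of the statement.

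The main obstacle is the bookkeeping in the identification step: one has to verify that the mean offspring structure of our $Z_t$ matches, up to the allowed reparametrization, the BRWRE of \cite{gantert:08}, that the matrices $A_k,\tilde A_k$ indeed govern the dominant harmonic growth used in their extinction criterion, and that the a.s.\ dichotomy established in their paper transfers, environment-by-environment, to a $\Theta$-a.s.\ dichotomy for the RWDCRE. Once this translation is verified, the theorem follows directly by combining Theorem \ref{thm:br_mc} with their classification.
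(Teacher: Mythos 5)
Your proposal follows exactly the same route as the paper: reduce via Theorem~\ref{thm:br_mc} to the extinction problem for the associated infinite-type branching process, recognize it (environment-by-environment, with i.i.d.\ environment) as a nearest-neighbour BRWRE on $\Z$ with $\mu_i^0=0$, and invoke Theorems~2.6 and~2.9 of \cite{gantert:08}. The paper treats this as essentially immediate (it offers only the one-sentence remark preceding the statement), while you spell out the identification and normalization steps; the substance is the same.
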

\begin{rem} 
Observe that Theorem \ref{thm:re} gives a complete classification since 
$\mu_0^-+\mu_0^+=1/\nu_0 > 1$.
\end{rem}

\subsection{Ergodicity}
In order to prove (non-)ergodicity we investigate whether the
reversible measure in Equation (\ref{eq:rev_m}) is (in)finite.
This method leads to the following generalization of the finite
case.

\begin{prop}\label{prop:non_ergod}
The random walk on $\T$ is non-ergodic if $\lambda^+(M)>1$ and it
is ergodic if $r_\infty (M)<1$.  If $\lambda^+(M)=1$ and the
supremum in the definition of $\lambda^+(M)$ is attained the
process is non-ergodic as well.
\end{prop}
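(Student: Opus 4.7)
The plan is to analyse the total mass of the reversible measure $m$ from \eqref{eq:rev_m}. The chain is irreducible (as $G$ is connected) and aperiodic (by the loop at $\orig$), so by standard Markov chain theory it is (positive recurrent and hence) ergodic if and only if $m(\T) = \sum_{x\in\T} m(x) < \infty$; otherwise it is transient or null-recurrent, i.e.\ non-ergodic. Claim (1) is then immediate from Theorem \ref{thm:rec}: $\lambda^+(M) > 1$ forces transience, hence non-ergodicity.

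For claims (2) and (3) I would first reformulate the layer sums of $m$ as a matrix iteration. Writing $\tilde m(i,j) := p(i,j)/p(-j)$, the recursion in \eqref{eq:rev_m} telescopes along the geodesic $\langle \orig = x_0, x_1, \dots, x_n \rangle$ from $\orig$ to $x \in \T_n$ to give $m(x) = \prod_{k=1}^{n} \tilde m(\tau(x_{k-1}), \tau(x_k))$, so that
\begin{equation*}
\sum_{x\in\T_n} m(x) = (\tilde M^n \mathbf{1})(i_0),
\end{equation*}
where the sum on the left corresponds to summing over all length-$n$ paths in $G$ starting at $i_0$. With $D := \mathrm{diag}(p(-i))$ and $P := (p(i,j))$ one has $M = D^{-1} P$ and $\tilde M = P D^{-1} = D M D^{-1}$. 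Assumption \eqref{equ:p-i} makes $D$ a bounded, boundedly invertible, positive diagonal operator on $\ell_\infty$, so $M$ and $\tilde M$ are similar on $\ell_\infty$. In particular $r_\infty(\tilde M) = r_\infty(M)$, and $\lambda^+(\tilde M) = \lambda^+(M)$ with the supremum attained for one exactly if for the other, via the bijection $f \mapsto Df$ on positive bounded functions.

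Claim (2) then drops out: $r_\infty(M) < 1$ gives geometric decay of $\|\tilde M^n\|_\infty$, so $\sum_{x\in\T_n} m(x) \leq \|\tilde M^n\|_\infty$ is summable in $n$ and $m(\T) < \infty$. For claim (3), take $0 < f \in \ell_\infty$ realising $Mf \geq f$; then $g := Df$ is strictly positive and bounded by \eqref{equ:p-i} and satisfies $\tilde M g \geq g$. Iterating yields $\tilde M^n g \geq g$ pointwise, and hence
\begin{equation*}
\sum_{x\in\T_n} m(x) = (\tilde M^n \mathbf{1})(i_0) \geq \|g\|_\infty^{-1} (\tilde M^n g)(i_0) \geq \|g\|_\infty^{-1} g(i_0) > 0
\end{equation*}
for every $n$, so $m(\T) = \infty$ and the chain is non-ergodic.

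The main subtlety is the mismatch between two natural normalisations: the definition of $M$ divides $p(i,j)$ by the backward probability at the \emph{source} $i$, whereas $m(x)/m(x^-)$ produces a division by the backward probability at the \emph{target}, which leads to $\tilde M$ rather than $M$. The uniform bound \eqref{equ:p-i} on $p(-i)$ is exactly what turns $D$ into a bounded isomorphism of $\ell_\infty$ and thereby allows the spectral information on $M$ to be transferred into summability information for the reversible measure on $\T$.
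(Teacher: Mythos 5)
Your proof is essentially correct and takes a somewhat different route from the paper's. You compute the layer sums $\sum_{x\in\T_n} m(x)$ directly by telescoping the recursion for $m$ along geodesics from the root, obtaining $\sum_{x\in\T_n} m(x) = (\tilde M^n \1)(i_0)$ with $\tilde m(i,j)=p(i,j)/p(-j)$, and then transfer spectral information from $M$ to $\tilde M$ via the similarity $\tilde M = DMD^{-1}$ (the bound \eqref{equ:p-i} makes $D$ a bounded isomorphism of $\ell_\infty$). The paper instead decomposes $m(\T)$ over the cones $\T_i$, approximates each $\T_i$ by finite subtrees of height $n$, and derives the vector recursion $\m_n=\p+M\m_{n-1}$, hence $\m_n=\sum_{k=0}^{n-1}M^k\p$. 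The two routes are algebraically close -- note $\tilde M^n\1 = DM^n D^{-1}\1 = DM^n\p$, so your layer sum equals $p(-i_0)(M^n\p)(i_0)$ -- but your version bypasses the cone decomposition and the finite-tree approximation, and it makes the role of the conjugation by $D$ explicit rather than hiding it in the initial vector $\p$. The paper gives non-ergodicity for $\lambda^+(M)>1$ both by referring to Theorem \ref{thm:rec}, as you do, and by the direct argument; your one-line reduction is fine as stated.

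One small gap in claim (3): you need $g(i_0)>0$ in the final inequality, which requires $f(i_0)>0$. The definition of $\lambda^+(M)$ only gives a nonzero nonnegative $f\in\ell_\infty$ with $Mf\ge f$, so a priori $f$ could vanish at $i_0$. This is easily patched: by irreducibility of $M$ (equivalently $\tilde M$) there is some $N$ with $(\tilde M^N g)(i_0)>0$, and since $\tilde M^{n}g\ge \tilde M^N g$ for all $n\ge N$ (apply $\tilde M^N$ to $\tilde M^{n-N}g\ge g$), the lower bound $(\tilde M^n\1)(i_0)\ge \|g\|_\infty^{-1}(\tilde M^N g)(i_0)>0$ holds for all $n\ge N$, which still gives $m(\T)=\infty$. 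Alternatively, replace $f$ at the outset by $M^N f$ for suitable $N$. You should make this explicit; the paper's proof has the parallel issue (it concludes divergence of $\sum_n (\lambda^*)^n f^*$ only in components where $f^*$ is positive) but glosses over it in the same way. With that patch, your argument is a valid and somewhat cleaner alternative.
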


Since in the infinite case the random walk is \emph{more
transient} in the critical case we conjecture the random walk to
be non-ergodic for $\lambda^+(M)\geq 1$.

\subsection{Rate of Escape}\label{sec:rateofescape}

In this section we generalize the results of \cite{nagnibeda:02}
to covers of infinite graphs and more general length functions.
In order to state the main result we need the following
definitions and notations. Suppose we are given a bounded function $w:G \times G\to \R$ representing a weight for each edge in $G$. We define
recursively a length function $l$ on $\mathcal{T}$ by $l(o):=0$
and $l(x):=l(x^-)+ w\bigl(\tau(x^-),\tau(x)\bigr)$ otherwise. If
$w(\cdot,\cdot)=1$, then $l(x)$ is just the natural graph distance
on $\T$ (that is, the number of edges that connect $o$ with $x$)
denoted by $|x|$. Observe that  $w$ may  take  negative values; in
this case one can think of $w$ describing height differences between 
neighbour vertices. If there is some constant $\ell\in\R$ such
that
$$
\ell =\lim_{n\to\infty} \frac{l(X_n)}{n} \quad \textrm{ almost surely,}
$$
then $\ell$ is called the \emph{rate of escape} or \emph{drift} of
$(X_n)_{n\in\N_0}$ w.r.t. the length function $l$. We suppose for
the rest of this section that $(X_n)_{n\in\N_0}$ is
\emph{transient}. Let $i\in G$, $x,y\in\T$ with $\tau(x)=i$ and
$z\in\mathbb{C}$. We introduce the following generating functions:
\begin{eqnarray*}
F(-i|z) & := &  \sum_{n\geq 0} \P[X_n=x^-,\forall m<n: X_m\neq x^- \mid
X_0=x]\,z^n,\\
G(x,y|z) & := & \sum_{n\geq 0} \P[X_n=y \mid X_0=x]\,z^n,\\
\overline{G}_i(z) & := & \sum_{n\geq 0} \P[X_n=x,\forall m<n: X_m\neq x^- \mid
X_0=x]\,z^n.
\end{eqnarray*}
We also write  $F(-i):=F(-i|1)$. Note that the definitions of $F(-i|z)$ and
$\overline{G}_i(z)$ are independent of the specific choice of $x\in\T$ with
$\tau(x)=i$. Moreover, we have the following equations:
\begin{eqnarray}
F(-i|z) & = & p(-i)\,z + \sum_{j\in G} p(i,j)\,z\,F(-j|z)\,F(-i|z),\label{f-equations}\\
\overline{G}_i(z) & = & \frac{1}{1-\sum_{j\in G} p(i,j)\,z\,F(-j|z)}.\nonumber
\end{eqnarray}
Furthermore, we have $F(-i|z)=\overline{G}_i(z)\,p(-i)\,z$. Recall
that the spectral radius of $(X_n)_{n\in\N_0}$ is the inverse of
the radius of convergence  $R$ of $G(o,o|z)$.

Define for $k\in\N_0$ the \textit{exit times}
\begin{eqnarray}\label{def:exittime}
\e{k} & := & \min\bigl\lbrace m\in\N_0 \,\bigl|\,  \forall m'\geq
m: |X_{m'}|\geq k\bigr\rbrace
\end{eqnarray}
and write $\W{k} := X_{\e{k}}$. Observe that
$\bigl(\tau(\W{k})\bigr)_{k\in\N_0}$ is a Markov chain with
transition matrix $Q=\bigl(q(i,j)\bigr)_{i,j\in G}$ defined by
\begin{equation}\label{def:q}
q(i,j) := \frac{1-F(-j)}{1-F(-i)}   p(i,j) \overline{G}_i(1).
\end{equation}
This can be easily verified analogously to \cite{nagnibeda:02} or
\cite{gilch:07}.
Now, we can state the result about the rate of escape.
\begin{thm}\label{thm:rateofescape}
Suppose that $Q$ is positive recurrent with invariant probability measure
$\nu$. Let
$$
\Lambda := \sum_{i\in G} \nu(i)\,\frac{F'(-i|1)}{F(-i)}.
$$
Then the following hold::
\begin{enumerate}
\item If $\Lambda<\infty$  we have for each bounded weight
function $w$
\begin{equation}\label{eq:rate_of_escape}
\lim_{n\to\infty} \frac{l(X_n)}{n} = \biggl( \sum_{i,j\in G}
w(i,j) \nu(i)q(i,j)\biggr) \cdot \biggl(\sum_{i\in G}
\nu(i)\,\frac{F'(-i|1)}{F(-i)} \biggr)^{-1} \quad \textrm{ almost surely.}
\end{equation}
In this case the rate of escape w.r.t. the natural distance
exists and is positive, i.e.,
$$
\lim_{n\to\infty} \frac{|X_n|}{n}>0\quad \textrm{ almost surely.}
$$
\item
If the spectral radius of $(X_n)_{n\in\N_0}$ is strictly smaller
than $1$, then $\Lambda<\infty$.
\item
If $\Lambda =\infty$ then $\liminf_{n\to\infty} \frac{l(X_n)}{n}=0$.
\end{enumerate}
\end{thm}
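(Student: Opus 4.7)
The plan is to decompose the trajectory at the modified exit times $\e{k}$ and apply Birkhoff's ergodic theorem to the Markov chain $(\tau(\W{k}))_{k\in\N_0}$, which by hypothesis is positive recurrent with unique stationary distribution $\nu$. Transience ensures $\e{k}<\infty$ a.s., so this chain is well-defined with transition matrix $Q$. The core identity I would prove first is
\[
\E[\e{k+1}-\e{k}\mid\tau(\W{k})=i]=\frac{F'(-i|1)}{F(-i)}.
\]
To derive it, note that by the Markov property, given $\W{k}=x$ with $\tau(x)=i$, the continuation $(X_{\e{k}+n})_{n\ge 0}$ has the law of the walk started at $x$ conditioned never to visit $x^-$. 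Decomposing this conditioned trajectory into loops at $x$ plus one terminal escape through $\W{k+1}$, the number of returns to $x$ is geometric with mean $\overline{G}_i(1)$, and each inner loop has conditional mean length $\widetilde E_i'(1)/\widetilde E_i(1)$, where $\widetilde E_i(z):=\sum_j p(i,j)\,z\,F(-j|z)$. Summing yields $1+\widetilde E_i'(1)\overline{G}_i(1)$, and differentiating $F(-i|z)=p(-i)z/(1-\widetilde E_i(z))$ at $z=1$ identifies this quantity as $F'(-i|1)/F(-i)$.

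Given the identity, part~(1) proceeds as follows. Under $\Lambda<\infty$, Birkhoff's theorem applied to the ergodic chain $(\tau(\W{k}))$ with the integrable functional $i\mapsto F'(-i|1)/F(-i)$ yields $\e{k}/k\to\Lambda$ almost surely. Applying the ergodic theorem to the pairs chain $(\tau(\W{k}),\tau(\W{k+1}))$ with bounded integrand $w$, combined with the telescoping $l(\W{k})=\sum_{m=0}^{k-1}w(\tau(\W{m}),\tau(\W{m+1}))$, gives $l(\W{k})/k\to\sum_{i,j}w(i,j)\nu(i)q(i,j)$ a.s. Dividing the two limits yields convergence of $l(\W{k})/\e{k}$ to the claimed ratio. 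To interpolate to general $n$, I would use the bound $|l(X_n)-l(\W{k})|\le\|w\|_\infty(\e{k+1}-\e{k})$ for $n\in[\e{k},\e{k+1})$ together with an estimate showing $\max_{j\le K(n)}(\e{j+1}-\e{j})/n\to 0$ a.s. Taking $w\equiv 1$ gives $\lim|X_n|/n=1/\Lambda>0$.

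For (2), the hypothesis that the spectral radius is strictly less than $1$ allows $F(-i|z)$ to be extended analytically past $z=1$ with bounds uniform in $i$, via standard Green-function estimates together with (\ref{eq:assumption}) and (\ref{equ:p-i}); combined with $F(-i)\ge p(-i)>\eps$, this forces $\Lambda\le\eps^{-1}\sup_i F'(-i|1)<\infty$. For (3), I would truncate the integrand $i\mapsto\min(F'(-i|1)/F(-i),N)$ in Birkhoff's theorem and let $N\to\infty$ to deduce $\e{k}/k\to\infty$ a.s.; along the subsequence $n=\e{k}$, one then has $l(\W{k})/\e{k}=(l(\W{k})/k)/(\e{k}/k)\to 0$, and controlling the oscillations of $l(X_n)/n$ between consecutive exit times produces $\liminf_{n\to\infty} l(X_n)/n=0$.

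The hard part will be the interpolation in parts~(1) and (3): the increments $\e{k+1}-\e{k}$ are integrable on average but not uniformly bounded, so controlling $\max_{0\le j\le K(n)}(\e{j+1}-\e{j})/n$ requires more than ergodicity alone. The cleanest route I see is to apply the ergodic theorem directly to the stationary sequence $(\tau(\W{k}),\e{k+1}-\e{k})$ and invoke a Borel--Cantelli-type argument to push the maximum down. A secondary delicate point is the Wald-type derivation of $\E[\e{k+1}-\e{k}\mid\tau(\W{k})=i]$, where the interchange of expectation with the geometric loop decomposition must be justified carefully; the uniform bound (\ref{equ:p-i}) is precisely what ensures the underlying generating functions are well-behaved at $z=1$.
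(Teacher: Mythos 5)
Your overall architecture for parts (1) and (2) coincides with the paper's: apply the ergodic theorem to the positive recurrent chain $\bigl(\tau(\W{k})\bigr)_k$ (and the associated pairs chain) to obtain $\e{k}/k\to\Lambda$ and $l(\W{k})/k\to\sum_{i,j}w(i,j)\nu(i)q(i,j)$, then interpolate via $\mathbf{k}(n)=\max\{k\,:\,\e{k}\le n\}$; and for (2), use $R>1$ to obtain uniform bounds on $G(x,x|\cdot)$ near $1$ and hence a uniform bound on $F'(-i|1)$, combined with $F(-i)\ge p(-i)>\eps$. One small remark on (1): the Borel--Cantelli argument you anticipate needing is superfluous. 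You do not need $\max_{j\le \mathbf{k}(n)}(\e{j+1}-\e{j})/n\to 0$, only $(\e{\mathbf{k}(n)+1}-\e{\mathbf{k}(n)})/n\to 0$, and this already follows from $\e{k}/k\to\Lambda$: writing $\e{\mathbf{k}(n)+1}/n=\frac{\e{\mathbf{k}(n)+1}}{\mathbf{k}(n)+1}\cdot\frac{\mathbf{k}(n)+1}{n}\to\Lambda\cdot\Lambda^{-1}=1$, and similarly $\e{\mathbf{k}(n)}/n\to 1$, so their difference divided by $n$ vanishes. The paper, following \cite{nagnibeda:02}, uses exactly this interpolation and no extra maximal estimate.

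There is, however, a genuine gap in your treatment of part (3). You propose to truncate the function $i\mapsto F'(-i|1)/F(-i)$ at level $N$ and apply Birkhoff's theorem to the chain $\bigl(\tau(\W{k})\bigr)_k$. This indeed shows that $\frac1k\sum_{l=1}^k\min\bigl(F'(-\tau(\W{l-1})|1)/F(-\tau(\W{l-1})),N\bigr)$ converges to a constant $\tilde C_N$ with $\tilde C_N\uparrow\Lambda=\infty$. But this gives no lower bound on $\e{k}/k=\frac1k\sum_{l}(\e{l}-\e{l-1})$: the random increments $\e{l}-\e{l-1}$ have \emph{conditional mean} $F'(-\tau(\W{l-1})|1)/F(-\tau(\W{l-1}))$ given the type, but there is no pathwise inequality $\e{l}-\e{l-1}\ge\min\bigl(F'(-\tau(\W{l-1})|1)/F(-\tau(\W{l-1})),N\bigr)$. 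You have truncated the conditional expectation, not the random variable, and since the increments are not integrable when $\Lambda=\infty$, you cannot pass from the behaviour of the conditional means to the behaviour of the empirical averages by a martingale or law-of-large-numbers argument either. The fix, which is what the paper does, is to work with the enlarged positive recurrent chain $\bigl(\tau(\W{l}),\e{l}-\e{l-1}\bigr)_l$ and truncate the second coordinate itself, i.e.\ use $g_N(i,n):=n\wedge N$. Then $g_N(\tau(\W{l}),\e{l}-\e{l-1})\le\e{l}-\e{l-1}$ pathwise, the ergodic theorem applies to $g_N$ (bounded, hence integrable), and letting $N\to\infty$ gives $\e{k}/k\to\infty$ a.s.; the conclusion $\liminf_n |X_n|/n=0$ then follows along the subsequence $n=\e{\mathbf{k}(n)}$ as you indicate.
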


\begin{rem}
The formula for the rate of escape (\ref{eq:rate_of_escape}) is of
rather formal nature, since it is given in terms of the generating
functions $F(-i|z)$. These generating functions  are solution of
the infinite system of algebraic equations (\ref{f-equations}).
\end{rem}

\begin{rem}\label{rem:loc:infinite}
Theorem \ref{thm:rateofescape}.1 and \ref{thm:rateofescape}.3 hold
for locally infinite graphs, too.
\end{rem}

The assumption of positive recurrence of $Q$ in Theorem
\ref{thm:rateofescape} is essential. In the following we give an
example, Example \ref{ex:speed2}, where $Q$ governs a transient
random walk such that the rate of escape is random. An example,
Example \ref{ex:speed1}, where $Q$ is positive recurrent is given
in \mbox{Section \ref{sec:entropy}.}

\begin{ex}\label{ex:speed2}
Consider $G=\mathbb{Z}$ with its usual neighbourhood relation and
transition probabilities given by
$$
p_G(i,i+1)=
\begin{cases}
p, & \textrm{if } i\geq 1\\
1-q, & \textrm{if } i\leq -1
\end{cases}
\textrm{, }\quad
p_G(i,i-1)=
\begin{cases}
1-p, & \textrm{if } i\geq 1\\
q, & \textrm{if } i\leq -1
\end{cases},
$$
and $p_G(0,1)=p_G(0,-1)=1/2,$
where $p,q\in(1/2;1)$, $p\neq q$.  We set $i_0:=0$. Choose now
$c_1,c_2$ with
$$
0<c_1<c_2<\min\biggl\lbrace 1-\frac{1}{2p},1-\frac{1}{2q}\biggr\rbrace.
$$
Consider the
directed cover $\mathcal{T}$ of $G$, where $p(-i):=c_1$ if $i\geq 0$, and
$p(-i):=c_2$ if $i<0$. By definition of $c_1$ and $c_2$, the random walk on $\mathcal{T}$ visits only
finitely many vertices of each cone type, since
\begin{eqnarray*}
p(i,i+1)=1-p(i,i-1)-p(-i)& = &  1-\bigl((1-c_1)(1-p)+c_1\bigr) > \frac{1}{2}, \ \textrm{ if } i\geq 0, \\
p(i,i-1)=1-p(i,i+1)-p(-i)& = & 1-\bigl((1-c_2)(1-q)+c_2\bigr) > \frac{1}{2}, \ \textrm{ if } i\leq -1.
\end{eqnarray*}
Thus, $\tau(X_k)$ tends either to $+\infty$ or $-\infty$. Considering
the speed w.r.t. the natural graph metric in the tree, in the first case the random walk has speed
$$
\E\bigl[|X_{n+1}|-|X_n|\,\bigl|\, \tau(X_n)=i>0\bigr] =(1-c_1)\,p + (1-c_1)(1-p) - c_1 = 1-2c_1,
$$
while in the case $\tau(X_k)\to-\infty$ the rate of escape is
different, namely
$$
\E\bigl[|X_{n+1}|-|X_n|\,\bigl|\, \tau(X_n)=i<0\bigr]=(1-c_2)\,q + (1-c_2)(1-q) - c_2 = 1-2c_2.
$$
\end{ex}

\subsection{Asymptotic Entropy and Hausdorff Dimension}\label{sec:entropy}

A characteristic of transient random walks that is connected to
the rate of escape is the asymptotic entropy of the process. Let
$\pi_n$ be the distribution of $X_n$, that is, for $x\in
\mathcal{T}$ the number $\pi_n(x)$ is the probability of visiting $x$ at time
$n$ when starting at $o$. If there is some non-negative
number $h$ such that
$$
h=\lim_{n\to\infty} -\frac{1}{n}\E\bigl[\log \pi_n(X_n)\bigr],
$$
then $h$ is called the \emph{asymptotic entropy}, introduced in \cite{avez72}. For the rest of this section we
assume   that the transition probabilities are \emph{bounded away
from $0$}, that is, $p(x,y)\geq \varepsilon_0$ for some
$\varepsilon_0>0$ and all $x,y\in\T$. Under the assumption that
$(X_n)_{n\in\N_0}$ is transient we obtain the following theorem
that links the asymptotic entropy with the rate of escape.

\begin{thm}\label{th:entropy}
Assume that $Q$ is positive recurrent with invariant probability measure $\nu$
and that the
spectral radius of $(X_n)_{n\in\N_0}$ is strictly smaller than $1$. Let $\ell_0$ be the rate of escape w.r.t. the natural graph
metric. Then the entropy rate $h$ exists and satisfies
$$
h = \ell_0  \sum_{i,j\in G} -\nu(i)q(i,j)\, \log q(i,j)>0.
$$
\end{thm}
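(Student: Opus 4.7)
The strategy, inspired by Benjamini and Peres \cite{benjamini-peres94}, is to identify the asymptotic entropy $h$ with the rate of escape of $(X_n)_{n\in\N_0}$ with respect to a length function built from the level-chain transition $q(i,j)$, and then to read off the value from Theorem \ref{thm:rateofescape}. Concretely, define the bounded edge weight $w(i,j):=-\log q(i,j)$ on $G\times G$ and let $l$ be the associated length function on $\T$. Since the spectral radius is strictly smaller than $1$, Theorem \ref{thm:rateofescape}.2 yields $\Lambda<\infty$, and Theorem \ref{thm:rateofescape}.1 applied first with $w\equiv 1$ and then with $w(i,j)=-\log q(i,j)$ gives
$$
\lim_{n\to\infty}\frac{|X_n|}{n}=\ell_0, \qquad \lim_{n\to\infty}\frac{l(X_n)}{n}=\ell_0\cdot H(Q,\nu) \quad \text{a.s.,}
$$
where $H(Q,\nu):=-\sum_{i,j\in G}\nu(i)q(i,j)\log q(i,j)$.

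The main task is then to prove that
$$
-\frac{1}{n}\log\pi_n(X_n) - \frac{l(X_n)}{n}\;\longrightarrow\;0\quad\text{a.s.\ and in }L^1.
$$
For the upper bound on $-\log\pi_n(X_n)$ I would use the tree identity $\pi_n(x)\le G(o,x|1)=F(o,x)\,G(x,x|1)$, noting that the spectral radius being $<1$ keeps $G(x,x|1)$ uniformly bounded, together with the dual identity
$$
\P[\W{k}=x]=F(o,x)\bigl(1-F(-\tau(x))\bigr)=\prod_{l=0}^{k-1}q\bigl(\tau(x_l),\tau(x_{l+1})\bigr),
$$
obtained by telescoping the $(1-F(-\cdot))$-factors in \eqref{def:q}; these give $-\log\pi_n(X_n)\le l(X_n)+O(1)$ uniformly in $n$. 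For the matching lower bound on $\pi_n(X_n)$ I would decompose $\pi_n(X_n)=\sum_{k\le n}\P[T_{X_n}=k]\,\P_{X_n}[X_{n-k}=X_n]$, use the cone-type structure to bound return-to-$X_n$ probabilities uniformly, and invoke the rate of escape to show that $T_{X_n}\sim n$ along typical paths. The deterministic bound $-\log\pi_n(X_n)\le n\log(1/\varepsilon_0)$ coming from $p(x,y)\ge\varepsilon_0$ then delivers uniform integrability and hence $L^1$ convergence.

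Positivity of $h$ follows at once: $\ell_0>0$ by Theorem \ref{thm:rateofescape}.1, and $H(Q,\nu)>0$ because $Q$ is an irreducible, positive recurrent, genuinely non-deterministic chain under our non-degeneracy hypotheses on the transition probabilities. The hard part of the plan is the lower bound in the comparison step: in the finitely generated group setting Benjamini--Peres exploit harmonic measure on the boundary, while here I would rely on the almost-sure convergence of $(X_n)$ to a random end of $\T$ and on the positive recurrence of the level chain $(\tau(\W{k}))$ to compare $X_n$ with the ray vertex $\W{|X_n|}$, absorbing the discrepancy into a sublinear error term via the generating function recursions \eqref{f-equations}.
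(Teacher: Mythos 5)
Your outline captures the right high-level idea (identify the entropy with the rate of escape for a length function that, at the generating-function level, records $-\log q$), but there are two concrete gaps that make the plan, as written, not go through.

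\textbf{Choice of weight and boundedness.} Theorem~\ref{thm:rateofescape} requires a \emph{bounded} weight function $w$. Your choice $w(i,j)=-\log q(i,j)$ is not obviously bounded: although $q(i,j)\le 1$ always, the ratio $\bigl(1-F(-j)\bigr)/\bigl(1-F(-i)\bigr)$ that enters $q$ via \eqref{def:q} is not a priori bounded away from $0$ uniformly over edges of $G$ (nor is $H(Q,\nu)$ a priori finite). The paper avoids this by choosing instead $w(i,j)=-\log L(x,y|1)$ where $L(x,y|1)=p(x,y)\,\overline{G}_{\tau(y)}(1)$; the crucial point is that $\varepsilon_0\le L(x,y|1)\le (1-\varepsilon_0)/(1-1/R)$ whenever $p(x,y)>0$, so this weight \emph{is} uniformly bounded, and only afterwards is $l(x)=-\log L(o,x|1)$ rewritten as $-\sum\log q(\tau(x_{j-1}),\tau(x_j))$ plus correction terms, controlled via a $\liminf$/ergodicity argument. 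The possibility $H(Q,\nu)=\infty$ is then excluded by a separate contradiction argument (via truncations $h_N$), not by appealing to boundedness of $-\log q$. Your plan skips this entirely; without it you cannot even invoke the rate-of-escape theorem for your $l$.

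\textbf{The bound $\limsup_n -\frac1n\E[\log\pi_n(X_n)]\le h$.} Your first step, $\pi_n(x)\le G(o,x|1)=F(o,x)G(x,x|1)$, yields a \emph{lower} bound on $-\log\pi_n(X_n)$ (giving the paper's inequality $\liminf_n-\frac1n\log\pi_n(X_n)\ge h$), not the upper bound you state; in fact "upper bound on $-\log\pi_n$" and "lower bound on $\pi_n$" are the same task, and you label both paragraphs as handling different directions. For the genuinely hard direction (a sufficiently sharp lower bound on $\pi_n(X_n)$), your proposed hitting-time decomposition $\pi_n(x)=\sum_k\P[T_x=k]\P_x[X_{n-k}=x]$ with "$T_{X_n}\sim n$" is not adequate as sketched: you would need to control the return probability over a macroscopic time window, which is exactly the point where a naive approach stalls. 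The paper instead uses a Cauchy-estimate argument on the annulus of analyticity ($p^{(m)}(o,x)\le G(o,x|r)r^{-m}$ for $1<r<R$), a cut-off lemma to truncate the Green-function sum at $m<n^2$, Shannon's inequality to compare with $\sum_x p^{(n)}(o,x)\log p^{(n)}(o,x)$, and Fatou's Lemma to close the sandwich. That is a substantially different and more robust mechanism; none of it is reproduced or replaced in your plan.

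Finally, the positivity claim as you state it (``$H(Q,\nu)>0$ because $Q$ is \ldots genuinely non-deterministic'') needs justification: non-degeneracy of $Q$ is not automatic from the assumptions, and the paper itself only argues $h=\ell_0\, h'$ with $\ell_0>0$. This is a minor point compared with the two above, but it should not be dismissed in one clause.
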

\begin{rem}
The assumptions of Theorem  \ref{th:entropy} are satisfied for transient
random walks on directed covers of finite graphs, see \cite[Sections 4 \&
5]{nagnibeda:02}. Thus, for the case of covers of finite graphs we get the
completely new
result that the
entropy exists and is strictly positive whenever the random walk
is transient. 
\end{rem}
\begin{rem}
The matrix $Q$ as defined by (\ref{def:q}) is the transition
matrix of the Markov chain $\bigl(\tau(X_{\e{k}})\bigr)_{k\in\N}$,
where $\e{k}$ is a random time as defined in (\ref{def:exittime}).
The sum on the right hand side of Theorem \ref{th:entropy} equals the 
entropy rate (for positive recurrent Markov chains) of
$\bigl(\tau(X_{\e{k}})\bigr)_{k\in\N}$ defined by 
$$
h_Q:=\lim_{n\to\infty} -\frac{1}{n}\log
\pi\bigl(\tau(X_{\e{1}}),\dots,\tau(X_{\e{n}})\bigr),
$$ 
where
$\pi(\tau_1,\dots,\tau_n)$ is the joint distribution of
$\bigl(\tau(X_{\e{1}}),\dots,\tau(X_{\e{n}})\bigr)$. That is,
$h=\ell\, h_Q$. For more details we refer to \cite[Chapter 4]{cover-thomas}.
\end{rem}
\begin{rem}
The proof of Theorem \ref{th:entropy} shows also that the entropy rate equals
the rate of escape with respect to the Green metric. Recall that the distance
of a vertex $x\in\T$ to $o$ w.r.t. the
Green metric is given by $-\log \P\bigl[\exists n\in\N_0: X_n=x \mid X_0=o\bigr]$. 
\end{rem}
A consequence of the proof of the last theorem is the following corollary that states that one can compute the entropy also individually:
\begin{cor}\label{cor:entropy2}
Under the assumptions of Theorem \ref{th:entropy},
$$
h=\liminf_{n\to\infty} -\frac{\log \pi_n(X_n)}{n} \textrm{ almost surely}.
$$
\end{cor}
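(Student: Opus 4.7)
The plan is to establish $\liminf_{n\to\infty} -\frac{1}{n}\log \pi_n(X_n) = h$ almost surely by proving matching a.s. inequalities separately: a pathwise lower bound via comparison with the Green function, and an upper bound via Fatou's lemma applied to the convergence-in-mean statement already contained in Theorem \ref{th:entropy}. For the lower bound, I would use the trivial inclusion $\{X_n = x\} \subseteq \{\exists\, m\geq 0 : X_m = x\}$, which yields $\pi_n(x) \leq F(o,x|1)$ for every $x\in\T$ and every $n$. Evaluating at $x = X_n$ gives
$$-\frac{1}{n}\log \pi_n(X_n) \;\geq\; -\frac{1}{n}\log F(o,X_n|1).$$
By the remark following Theorem \ref{th:entropy}, the entropy rate $h$ coincides with the rate of escape in the Green metric $d_G(o,y) = -\log F(o,y|1)$, so $-\frac{1}{n}\log F(o,X_n|1) \to h$ almost surely. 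Hence $\liminf_n -\frac{1}{n}\log \pi_n(X_n) \geq h$ a.s.

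For the upper bound, set $Y_n := -\frac{1}{n}\log \pi_n(X_n)$. Since $\pi_n(X_n) \leq 1$, the variables $Y_n$ are non-negative. Theorem \ref{th:entropy} gives the convergence $\E[Y_n] \to h$. Fatou's lemma applied to $(Y_n)_{n\geq 1}$ then yields
$$\E\bigl[\liminf_{n\to\infty} Y_n\bigr] \;\leq\; \liminf_{n\to\infty} \E[Y_n] \;=\; h.$$
Combined with the a.s. lower bound $\liminf_n Y_n \geq h$ from the previous paragraph, this forces $\liminf_n Y_n = h$ almost surely, which is the desired identity.

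The only non-soft ingredient is the almost-sure convergence $-\frac{1}{n}\log F(o,X_n|1) \to h$, and this is where I expect the hard work of Theorem \ref{th:entropy} to actually be done: one writes $F(o,X_n|1)$ as a product over the geodesic from $\orig$ to $X_n$ of factors of the form $F(-\tau(\cdot)|1)$ (and boundary terms), recognizes the product structure along the backbone traced out by $(\tau(\W{k}))_k$, and applies the Shannon--McMillan--Breiman theorem to the positive recurrent chain $(\tau(\W{k}))_k$ together with the a.s. growth $|X_n|/n \to \ell_0$ to obtain the Green-metric rate of escape. Once that input is granted, the corollary follows entirely from the soft pointwise inequality $\pi_n(x) \leq F(o,x|1)$ and Fatou's lemma, as above.
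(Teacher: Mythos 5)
Your proof is correct and takes essentially the same route as the paper: the a.s.\ lower bound $\liminf_n -\frac{1}{n}\log\pi_n(X_n)\ge h$ comes from the pointwise comparison with the first-passage/Green function together with the a.s.\ Green-metric rate of escape established inside the proof of Theorem~\ref{th:entropy}, and the matching upper bound comes from Fatou's lemma applied to the non-negative sequence $Y_n=-\frac{1}{n}\log\pi_n(X_n)$, using $\E[Y_n]\to h$ from Theorem~\ref{th:entropy}. The paper packages the Fatou step into the inequality chain~(\ref{eq:entropy-final}) and uses $G(o,X_n|1)\ge\pi_n(X_n)$ rather than $F(o,X_n|1)\ge\pi_n(X_n)$, but since $G(o,x|1)=F(o,x|1)G(x,x|1)$ with $G(x,x|1)$ uniformly bounded this is only a cosmetic difference; the squeeze argument is identical.
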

Moreover, we get the following result:
\begin{thm}\label{thm:entropy3}
Under the assumptions of Theorem \ref{th:entropy}, 
$$
-\frac{1}{n}\log \pi_n(X_n) \xrightarrow{n\to\infty} h \ \textrm{ in } L_1,
$$
that is, $\int \bigl|-\frac{1}{n}\log \pi_n(X_n)-h\bigr| d\P \to 0$ for $n\to\infty$.
\end{thm}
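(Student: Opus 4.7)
The plan is to deduce the stated $L_1$ convergence from two ingredients that are already in hand: the existence of the limit $\frac{1}{n}\E[-\log \pi_n(X_n)] \to h$ provided by Theorem~\ref{th:entropy}, and the almost-sure identity $\liminf_{n\to\infty} -\frac{1}{n}\log \pi_n(X_n) = h$ provided by Corollary~\ref{cor:entropy2}. Set $Y_n := -\frac{1}{n}\log \pi_n(X_n)$; since $\pi_n(X_n)\in(0,1]$ we have $Y_n \geq 0$. Note that under the standing assumption $p(x,y) \geq \varepsilon_0$ one even has $\pi_n(X_n) \geq \varepsilon_0^n$ along the realised trajectory, so $Y_n \leq -\log \varepsilon_0$ uniformly; however, the argument below only needs the simple bound $Y_n \geq 0$.

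The argument proceeds by splitting $|Y_n - h| = (Y_n - h)^+ + (h - Y_n)^+$ and showing that each summand has expectation tending to zero. For the negative part, the liminf identity from Corollary~\ref{cor:entropy2} implies that, for $\P$-almost every $\omega$ and every $\delta > 0$, one has $Y_n(\omega) > h - \delta$ for all sufficiently large $n$; hence $(h - Y_n)^+ \to 0$ almost surely. Since $0 \leq (h - Y_n)^+ \leq h$, the dominated convergence theorem yields $\E[(h - Y_n)^+] \to 0$. For the positive part, the definition of $h$ gives $\E[Y_n - h] \to 0$, and the elementary identity
$$
\E[Y_n - h] \;=\; \E[(Y_n - h)^+] \;-\; \E[(h - Y_n)^+]
$$
then forces $\E[(Y_n - h)^+] \to 0$ as well. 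Adding the two expectations yields $\E|Y_n - h| \to 0$, which is the claim.

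No genuinely new obstacle arises in this final step: all the heavy lifting has already been carried out in the proofs of Theorem~\ref{th:entropy} and Corollary~\ref{cor:entropy2}, where the positive recurrence of $Q$ combined with the generating-function analysis of $F(-i|z)$ is what pins down the liminf of $Y_n$ pathwise as the constant $h$. The remaining upgrade to $L_1$ convergence is formal: a pathwise lower bound approaching $h$ together with convergence of means to $h$ leaves no slack, and dominated convergence on the bounded nonnegative random variable $(h - Y_n)^+$ closes the argument.
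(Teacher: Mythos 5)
Your proof is correct, and it takes a genuinely different (and in fact cleaner) route than the paper. The paper first establishes convergence in probability in a separate lemma (Lemma~\ref{lem:entropy3}), combining the almost-sure $\liminf$ from Corollary~\ref{cor:entropy2} with the convergence of the means $\E[Y_n]\to h$ through a direct manipulation of tail probabilities, and then upgrades to $L_1$ convergence by splitting on the events $A_{n,\eps}$, $B_{n,\eps}$ and invoking the \emph{uniform} bound $0\leq Y_n\leq -\log\varepsilon_0$. You instead argue in the spirit of a one-sided Scheff\'e lemma: split $|Y_n-h|$ into $(Y_n-h)^+$ and $(h-Y_n)^+$, observe that the negative part tends to $0$ a.s.\ (from the pathwise $\liminf$) and is dominated by the constant $h$, so $\E[(h-Y_n)^+]\to 0$ by dominated convergence; then the identity $\E[Y_n-h]=\E[(Y_n-h)^+]-\E[(h-Y_n)^+]$ together with $\E[Y_n-h]\to 0$ forces $\E[(Y_n-h)^+]\to 0$ as well. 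This buys you two things: it bypasses the intermediate convergence-in-probability step entirely, and it only uses $Y_n\geq 0$ rather than the two-sided uniform boundedness $Y_n\leq-\log\varepsilon_0$, so the upgrade step is slightly more general. (One small point worth making explicit: the identity for $\E[Y_n-h]$ requires $\E[Y_n]<\infty$, which is supplied by Theorem~\ref{th:entropy} for $n$ large.)
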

Another  consequence of  Theorem \ref{th:entropy} follows with
\cite{kaimanovich98}, which gives an estimation (in some special
cases also an explicit formula) for the Hausdorff dimension of the
harmonic measure on the boundary of $\T$. Since almost every
random path on $\T$ converges to a point on the boundary $\partial
\T$ one can investigate the image $\varrho$ of the measure $\P$
under the mapping onto $\partial T$. Since we have a nearest neighbour random
walk this image is well-defined and it is called the \emph{harmonic measure} of $\P$ on
$\T$. For $\xi_1,\xi_2\in\partial \T$ let $\xi_1 \land \xi_2$ be
the confluent of the geodesics from $o$ to $\xi_1$ and from $o$ to
$\xi_2$. The Hausdorff dimension of $\varrho$
is defined to be
$$
\dim \varrho := \esssup_{\xi\in\partial\T} \liminf_{k\to\infty}
-\frac{\log \varrho\bigl(B_\xi^k \bigr)}{k},
$$
where $B_\xi^k:=\bigl\lbrace \zeta\in\partial \T \,\bigl|\, |\xi\land
\zeta|\geq k\bigr\rbrace$ for $k\in\N$, $\xi\in\partial\T$. The following
corollary follows directly with  \cite[Theorem 1.4.1 \& 1.5.3]{kaimanovich98}:

\begin{cor}\label{cor:entropy4}
Under the assumptions of Theorem \ref{th:entropy}, we have:
\begin{enumerate}
\item The Hausdorff dimension of $\varrho$ satisfies
$$
\sum_{i,j\in G} -\nu(i)q(i,j)\, \log q(i,j)
\leq \dim \varrho \leq \frac{-\log \eps_0}{\ell_0},
$$
where $\ell_0$ is the rate of escape w.r.t. the natural graph
metric. 
\item If the entropy $h$ also satisfies
$h=\lim_{n\to\infty} -\frac{1}{n}\log
  \pi_n(X_n)$ almost surely, then the Hausdorff
dimension of $\varrho$ is given by
$$
\dim \varrho = -\sum_{i,j\in G} \nu(i)q(i,j)\, \log q(i,j) = \frac{h}{\ell_0}.
$$
\end{enumerate}
\end{cor}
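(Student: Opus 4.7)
The plan is to obtain both parts as direct consequences of Kaimanovich's Hausdorff-dimension theorems for harmonic measure on trees, Theorems~1.4.1 and~1.5.3 of \cite{kaimanovich98}, and then to read off the explicit coefficients using Theorem \ref{th:entropy}. The substantive work lies in checking that the hypotheses of Kaimanovich's theorems are satisfied in our setting.

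First I would verify those hypotheses. Transience of $(X_n)_{n\in\N_0}$ together with the tree structure of $\T$ ensures that sample paths converge almost surely to a random point of $\partial\T$, so the harmonic measure $\varrho$ is well-defined as the law of this boundary limit. The assumptions of Theorem \ref{th:entropy} guarantee existence and strict positivity of both the drift $\ell_0=\lim_{n\to\infty}|X_n|/n$ (almost surely) and the asymptotic entropy $h$; Theorem \ref{thm:entropy3} supplies $L_1$-convergence $-\frac{1}{n}\log\pi_n(X_n)\to h$; and the standing assumption of Section \ref{sec:entropy} provides the uniform ellipticity constant $\eps_0$ with $p(x,y)\geq\eps_0$, which is exactly what Kaimanovich's upper bound requires.

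With these inputs in place, Theorem~1.4.1 of \cite{kaimanovich98} yields the lower bound $h/\ell_0\leq\dim\varrho$, while Theorem~1.5.3 yields the upper bound $\dim\varrho\leq -\log\eps_0/\ell_0$. Substituting the identity $h=\ell_0\sum_{i,j\in G}-\nu(i)q(i,j)\log q(i,j)$ from Theorem \ref{th:entropy} gives part~1, and for part~2 the extra hypothesis $-\frac{1}{n}\log\pi_n(X_n)\to h$ almost surely is precisely the regularity assumption under which Kaimanovich's two-sided bound collapses to the equality $\dim\varrho=h/\ell_0$; inserting again the explicit expression for $h/\ell_0$ yields the displayed formula. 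The only nontrivial point is to confirm that Kaimanovich's abstract framework applies verbatim to our walk, i.e.\ that his harmonic measure coincides with our boundary-image measure $\varrho$ and that his ergodic-theoretic prerequisites line up with those furnished by Theorems \ref{thm:rateofescape}, \ref{th:entropy}, and \ref{thm:entropy3}. Once this identification is granted, the corollary reduces to a direct substitution and no further computation is needed.
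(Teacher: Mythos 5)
Your proposal matches the paper's own treatment: the authors state Corollary \ref{cor:entropy4} as following directly from \cite[Theorems 1.4.1 \& 1.5.3]{kaimanovich98} together with the identity $h=\ell_0\sum_{i,j}-\nu(i)q(i,j)\log q(i,j)$ from Theorem \ref{th:entropy}, with no further argument given. Your version correctly identifies the two Kaimanovich theorems as supplying the lower and upper bounds respectively, notes that the uniform ellipticity constant $\eps_0$ is what feeds the upper bound, and observes that the a.s.\ Shannon--McMillan--Breiman-type convergence in part~2 is exactly what upgrades the two-sided estimate to the equality $\dim\varrho=h/\ell_0$; this is the same reasoning, just made explicit.
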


\begin{rem}
All the results in this section hold also for locally infinite
graphs with finite outdegrees.
\end{rem}

We conclude this section with two examples. While the first,
Example \ref{ex:entropy}, demonstrates  how  the asymptotic
entropy may be calculated explicitly in the finite case, the
second one, Example \ref{ex:speed1}, gives a sufficient condition for
positive recurrence of $Q$ in the infinite setting.

\begin{ex}\label{ex:entropy}
Consider a graph $G$ with vertex set $V=\{i_0,i_1,i_2\}$, edge set
$$
E=\bigl\lbrace (i_0,i_1), (i_0,i_2),(i_1,i_0),(i_2,i_1)\bigr\rbrace
$$
and its directed cover $\T$. We
define the following transition probabilities on $\T$:
$$
\begin{array}{c}
p(i_0,i_1):=\frac{1}{3},\ p(i_0,i_2):=\frac{1}{3},\ p(i_1,i_0):=\frac{1}{2},\ p(i_2,i_1):=\frac{3}{4},\\[1ex]
p(-i_0):=\frac{1}{3}, \ p(-i_1):=\frac{1}{2}, \ p(-i_2):=\frac{1}{4}.
\end{array}
$$

We can solve the system of polynomial equations
(\ref{f-equations}) with the help of \textsc{Mathematica} and hence
obtain a numerical approximation for the asymptotic entropy $h\approx 0.060499$.
%and for the Hausdorff dimension of the harmonic measure $\dim \varrho \approx 0.25614$.
\end{ex}

\begin{ex}\label{ex:speed1}
Suppose we are given a graph $G$ endowed with transition
probabilities  such that the random walk on $G$ is positive
recurrent with invariant probability measure $\nu_G$. Choose the
backward probabilities $p(-i)$ in such a way that the following
holds for every \mbox{$i,j\in G$:} if there are paths from $i_0$
to $i$ and from $i_0$ to $j$ in G of the same length, then
$p(-i)=p(-j)$. This condition implies that
$F\bigl(-\tau(y_1)\bigr)=F\bigl(-\tau(y_2)\bigr)$ if
$y_1^-=y_2^-$. Thus, the quotient
\mbox{$c_i:=\bigl(1-F(-j)\bigr)/\bigl(1-F(-i)\bigr)$} depends only
on $i$ if $p(i,j)>0$. This yields that
$$
1=\sum_{j\in G} q(i,j) = \sum_{j\in G} c_i p(i,j)\,
\overline{G}_i(1) = c_i \overline{G}_i(1) \bigl(1-p(-i)\bigr),
$$
or equivalently, $\overline{G}_i(1)=c_i^{-1}
\bigl(1-p(-i)\bigr)^{-1}$. But this implies that $\nu_G$ is also
the invariant probability measure of $Q$.
\par
If we have $p(-i)\leq 1/2-\eps$ for some $\eps>0$, then $G(o,o|z)$ has radius
of convergence strictly greater than $1$, providing that the rate of escape
w.r.t $|\cdot |$ exists and is strictly positive. Furthermore, the asymptotic
entropy exists and is strictly positive.
\end{ex}

\section{Proofs}\label{sec:proofs}

\subsection{Proof of Theorem \ref{thm:growth}}
The second statement of the theorem is just the following observation. Since $A$ has non-negative
entries we find
$$
\|A^n \|_\infty=\sup_{x\in\ell_\infty,\|x\|_\infty=1}
\|A^nx\|_\infty =\|A^n \1\|_\infty,
$$
and hence
 $|\T_n|\leq  \|A^n\|_\infty$.

The proof of the first statement is divided into two steps:

\noindent Step 1: $\lambda^+(A)=br(\T)$. This follows from the
results on homesick random walks: Corollary \ref{cor:homesick}
together with the fact that a homesick random walk on a  tree $\T$
is recurrent if $\lambda > br(\T)$ and transient if $\lambda < br (\T)$, compare with Theorem 3.5 in \cite{lyons:book}. For this
purpose, observe that $\lambda^+(M)=\lambda^+(A)/\lambda$.

\noindent Step 2: $br(\T)=\underline{gr}(\T)$.
It is convenient to introduce the notation
$$
\dim \sup
\partial \T:=\lim_{n\to\infty} \max_{v\in \T} \frac1n \log
|\T^v_n|
$$
and to follow the argumentation of \S 14.4 and \S 14.5
in \cite{lyons:book}. We need the following lemma:

\begin{lem}\label{lem:gr}
Let $\T$ be a tree of bounded geometry. Then there exists a
sequence of subtrees $\T^j=\T^{v_j}$ such that
\begin{equation}
\dim \partial \T^j \geq \left(1-\frac1j\right) \dim \sup \partial
\T.
\end{equation}
\end{lem}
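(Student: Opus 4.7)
Set $\alpha := \dim\sup\partial \T$. The goal is, for each $j\geq 1$, to locate a vertex $v_j\in\T$ such that $br(\T^{v_j})\geq e^{(1-1/j)\alpha}$, which yields $\dim\partial\T^{v_j}\geq (1-1/j)\alpha$ by the definition of branching number.

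My starting point is the observation that $a_n := \max_{v\in\T}\log|\T^v_n|$ is subadditive: for any $v$, $|\T^v_{n+m}|=\sum_{u\in\T^v_n}|\T^u_m|\leq |\T^v_n|\max_u|\T^u_m|\leq e^{a_n+a_m}$. Fekete's lemma then gives $a_n/n\to \inf_n a_n/n = \alpha$, so in particular $a_n\geq n\alpha$ for every $n$, and hence there exists some $u_n\in\T$ with $|\T^{u_n}_n|\geq e^{n\alpha}$. Using bounded geometry and a K\"onig-style diagonal argument, one may further arrange for the sequence $(u_n)$ to accumulate inside a single cone: pick recursively $\orig = w_0, w_1, w_2,\ldots$ with $w_{k+1}$ a child of $w_k$ such that $\T^{w_{k+1}}$ contains $u_n$ for infinitely many $n$; pigeonhole makes this possible at each step since each $w_k$ has finitely many children. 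Set $v_j := w_K$ for $K$ large enough so that the slack $1/K$ is absorbed into $1/j$; then $\T^{v_j}$ contains $u_n$ for infinitely many $n$, each witnessing exponential descendent growth at rate $\alpha$ at scale $n$.

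To conclude $br(\T^{v_j})\geq e^{(1-1/j)\alpha}$, I would construct, for each $\beta < (1-1/j)\alpha$, a positive unit flow $\theta$ on $\T^{v_j}$ with $\theta(x)\leq e^{-\beta |x|_{v_j}}$, which by definition of branching number forces $br(\T^{v_j})\geq e^\beta$; letting $\beta\nearrow(1-1/j)\alpha$ then gives the claim. The natural approach is to take a weak limit along a subsequence of normalized uniform measures supported on the productive level sets $\T^{u_n}_n\subset\T^{v_j}$, then verify the capacity bound vertex by vertex. The main obstacle is precisely this flow construction: the productive vertices $u_n$ guarantee rapid descendent growth at scale $n$, but without a priori self-similar or subperiodic structure, the intermediate levels of $\T^{v_j}$ between productive scales may have too few descendants to carry the flow, so one must exploit the slack $\alpha - (1-1/j)\alpha = \alpha/j > 0$ between the growth exponent and the capacity exponent to absorb the deficit at non-productive scales. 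This is the point at which the argumentation of \cite[\S 14.4--14.5]{lyons:book} enters, producing the uniform lower bound on capacities at the cost of the factor $(1-1/j)$ in the exponent.
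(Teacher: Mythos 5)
Your sketch correctly identifies that the substance of the lemma is delegated to Lyons--Peres \cite[\S 14.4--14.5]{lyons:book}, and your proposed final step --- producing a positive unit flow $\theta$ on $\T^{v_j}$ with $\theta(x)\leq e^{-\beta(|x|-|v_j|)}$ for all $\beta < (1-\tfrac1j)\dim\sup\partial\T$ and reading off $br(\T^{v_j})\geq e^\beta$, hence $\dim\partial\T^{v_j}\geq\beta$ --- is a legitimate alternative to the paper's route, which instead passes through the H\"older exponent of $\theta_j$ and \cite[Theorem 14.15]{lyons:book}. These two ways of converting a capacity bound on a flow into a lower bound on Hausdorff dimension are essentially interchangeable here.

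The scaffolding you insert before that step, however, has a concrete gap and is in any case redundant. The K\"onig-style recursion is not guaranteed to run: nothing forces the productive vertices $u_n$ to escape to infinity. If, for instance, $|\T_n|\geq e^{n\alpha}$ for all $n$, one may legitimately take $u_n=\orig$ for every $n$, and then no child $w_1$ of $\orig$ satisfies $u_n\in\T^{w_1}$ for infinitely many $n$, so the recursion stalls immediately. Even in the cases where it does run, the conclusion --- that $\T^{v_j}$ contains infinitely many productive vertices $u_n$ --- only controls upper growth rates along a sparse sequence of scales and gives no handle on the non-productive intermediate levels where the flow must still be carried; you flag precisely this obstacle yourself. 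In other words, locating a cone that ``sees'' the productive vertices is not the bottleneck.

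The paper sidesteps all of this by quoting \cite[(14.19)]{lyons:book} directly. That statement already produces, for each $j$, \emph{both} the vertex $v_j$ \emph{and} the unit flow $\theta_j$ on $\T^{v_j}$ satisfying $\frac{1}{|x|-|v_j|}\log\theta_j(x)^{-1}\geq(1-\tfrac1j)\dim\sup\partial\T$ on all of $\T^{v_j}$; the choice of $v_j$ is an \emph{output} of the flow construction, not an input to it. The subadditivity/Fekete observation and the diagonal argument add nothing once you invoke that packaged result, and the diagonal argument as you have phrased it can fail. The fix is simply to drop that preamble and cite the flow statement, after which your branching-number step finishes the proof in one line.
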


\begin{proof}
We use the following fact, which is (14.19) in
\cite{lyons:book}.\newline 
\textbf{Fact:} For $j\geq 1$ there is
some unit flow $\theta_j$ on some subtree $\T^j=\T^{v_j}$ such that
$$
\frac1{|x|-|v_j|} \log \frac1{\theta_j(x)} \geq \left( 1-
\frac1j\right) \dim \sup \partial \T\quad\textrm{ for all } x\in \T^j.
$$
This
implies that for every ray $\xi=\langle
\xi_1,\xi_2,\ldots\rangle\in \partial \T^j$ we have
$$
\liminf_{n\to\infty} \frac1n \log \frac1{\theta_j(\xi_n)}\geq
\left(1-\frac1j\right) \dim \sup \partial \T.
$$
Recall the definition of the H\"older exponent of a unit flow (or
of its corresponding Borel probability measure on the
boundary respectively) $\mbox{H\"o}(\theta)(\xi):=\liminf_{n\to\infty} -\frac1n
\log \theta(\xi_n)^{-1}$. With this notation we have that
$\mbox{H\"o}(\theta_j)\geq \left(1-1/j\right) \dim \sup
\partial \T$. Those edges where the flow $\theta_j$ is positive
define a subtree $\U^j$ of $\T^j$. Theorem 14.15 of
\cite{lyons:book} implies that $\mbox{H\"o}(\theta_j)\leq \dim
\theta_j \leq \dim \partial \U^j$, and hence
$\mbox{H\"o}(\theta_j)\leq \dim \partial \T^j$.
\end{proof}

Due to Lemma \ref{lem:gr} there exists a sequence of subtrees
$\T^j=\T^{v_j}$ such that
\begin{equation*}
\dim \partial \T^j \geq \left(1-\frac1j\right) \dim \sup \partial
\T.
\end{equation*}
Furthermore, observe that $\dim \partial \T^j\leq \dim \partial
\T$ and $\log \underline{gr}(\T)\geq \dim\partial\T$; compare with Equation (\ref{eq:br_gr}). Hence,
\begin{eqnarray*}
\dim \partial \T & \geq & \left(1-\frac1j\right)\dim
\sup\partial\T\\
&\geq & \left(1-\frac1j\right)\liminf_{n\to\infty} \frac1n \log
|\T_n| \\
&=&
\left(1-\frac1j\right) \log \underline{gr}(\T)\\
&\geq & \left( 1-\frac1j\right) \dim\partial\T,
\end{eqnarray*}
Now, letting $j\to\infty$ yields $br(\T)=\underline{gr} \T$.
\begin{flushright}$\Box$\end{flushright}

\subsection{Proof of Theorem \ref{thm:br_mc} and \ref{thm:rec}}

\noindent\emph{Proof of Theorem \ref{thm:br_mc}.}
The idea is to couple a \emph{delayed} branching process $Z_t^*$ (in continuous
time) to the random walk $X_n$ on $\T$ (in discrete time) and to
show that the branching process dies out if and only if the random
walk visits the loop $(\orig, \orig)$. To this end, observe first
that the rates of the branching process $Z_t$ sum up to $1$, i.e.,
$p(-i)+\sum_{j\in G} p(i,j)=1$, and hence can be interpreted as
probabilities. The process $Z^*_t$ starts with one particle of
type $i_0$. With rate $p(-i_0)$ this particle dies. Observe
that the random walk started in $\orig$ returns to $\orig$ at the
first step with probability $p(-i_0)$. The particle produces an
offspring of type $j$ with rate $p(i_0,j)$. Observe that the
random walk on $\T$ is in a vertex $x\in\T$ with $|x|=1$ and label $j$ at time $1$ with
probability $p(i_0,j)$.  The \emph{delayed} (or \emph{sleepy})
process is defined inductively. As long as one particle of type
$i$ has one offspring alive (awake or sleeping) it is
\emph{sleeping}, i.e., it does neither die nor produce offspring.
If all its offspring have died it \emph{wakes up} and either dies
with rate $p(-i)$ or produces an offspring of type $j$ with
rate $p(i,j)$. If it does the latter it falls asleep again
and if it dies its direct ancestor wakes up.
One possibility to define $Z_t^*$ formally is through the following rates on the state space of non-backtracking paths (including the empty path $\left\langle \emptyset \right\rangle$) of $\T$. Let $x_0=\orig$ and

\begin{eqnarray*}
 \left\langle   x_0 \right\rangle &\rightarrow & \left\langle  x_0, x_1 \right\rangle \mbox{ at rate }  p(\tau(x_0),\tau(x_{1})), \mbox{ if } x_0=x_1^-,\\ 
\left\langle  x_0 \right\rangle &\rightarrow & \left\langle \emptyset \right\rangle \mbox{ at rate }   p(-\tau(x_0)), \\
 \left\langle \emptyset \right\rangle &\rightarrow & \left\langle \emptyset \right\rangle \mbox{ at rate }   1,
\end{eqnarray*}
and for $n\geq 1$
\begin{eqnarray*}
  \left\langle  x_0,\ldots, x_n \right\rangle &\rightarrow & \left\langle  x_0,\ldots, x_{n-1} \right\rangle \mbox{ at rate }   p(-\tau(x_n)), \\
  \left\langle   x_0,\ldots, x_n \right\rangle &\rightarrow & \left\langle  x_0,\ldots, x_n,x_{n+1} \right\rangle \mbox{ at rate }  p(\tau(x_n),\tau(x_{n+1})), \mbox{ if } x_n= x_{n+1}^-.
\end{eqnarray*}
Observe at this point that the path $\left\langle  x_0,\ldots, x_n \right\rangle$ corresponds to respectively one sleeping particle of type $\tau(x_i)$ ($i<n$) and one particle awake of type $\tau(x_n)$ in the genealogical order. The empty path $\left\langle \emptyset \right\rangle$ corresponds to the  extinction of the process. Let $S_n$ be the jump chain of $Z_t^*$ which is  the sequence of values taken by the continuous-time Markov chain
$Z_t^*.$  Define the projection $\phi$ from the space of paths to the set of vertices of $\T$ as $\phi(\left\langle  x_0,\ldots, x_n \right\rangle)=x_n$ and $\phi(\left\langle \emptyset \right\rangle)=\emptyset$.  Using standard arguments we can couple the two processes $S_n$ and $X_n$ such that $S_n=\emptyset$ if and only if $X_{n-1}=X_{n}=\orig.$ We conclude that $Z^*_t=\left\langle \emptyset \right\rangle$ for some $t>0$  if and only if the random walk $X_n$  visits the loop $(\orig, \orig)$.
It remains to prove that $Z^*_t=\left\langle \emptyset \right\rangle$ is equivalent to the extinction of the original process $Z_t$. Recall the interpretation of $Z^*_t$ as a delayed version of $Z_t.$ Hence, both processes can be seen as  functions on the same probability space and we can conclude with a standard coupling argument.
\begin{flushright}$\Box$\end{flushright}
\pagebreak[4]

\noindent\emph{Proof of Theorem \ref{thm:rec}.} Theorem
\ref{thm:rec}  is a  consequence of Theorem \ref{thm:br_mc} and
the following result \mbox{of \cite{bertacchi08}.} Consider a branching
random walk (BRW) as a continuous-time process where particles
live on a countable set $X$. Each particle lives on a site and,
independently of the others and the past history of the process,
has a exponential lifetime with mean $1$. A particle living at a
site $x$ gives birth to a new particle in $y$ with exponential
rate $k(x,y)$. Here $K=\bigl(k(x,y)\bigr)_{x,y\in X}$ is a matrix
with non-negative entries. In \cite{bertacchi08} it is shown
that there exists a critical value $\lambda_w$ depending on $K$ such that the
process dies out a.s. if $\lambda_w<1$ and  survives with positive
probability if $\lambda_w>1$. Furthermore, there is the following
characterization of this critical value, compare with Equation
(4.11) in \cite{bertacchi08}:
\begin{equation}
 \lambda_w=\lambda^+(K).
\end{equation}
The statement follows now with the observation that we can scale
our process by dividing each rate ($p(-i),~ p(i,j)$) at a vertex
$i$ by $p(-i)$ without influencing the survival of the process.
\begin{flushright}$\Box$\end{flushright}

\subsection{Prof of Theorem \ref{thm:rw_perco}}
We consider the infinite Galton-Watson process $Z_t$ with first
moments $M_\omega:=\lambda A_\omega$, compare with the paragraph
before Theorem \ref{thm:br_mc}. Due to Theorem \ref{thm:br_mc} it
suffices to prove the following:

\par\noindent
\textbf{Claim:} The process $Z_t$ survives (globally) if
and only if $\lambda > 1/2d$.

\par\noindent
First, observe that $\rho(A_\omega)=\limsup_{n\to\infty}
\left(A_\omega^{(n)}(i,j)\right)^{1/n}=2d$. This can be seen with
Equation (\ref{eq:spec_approx}) and the fact that $C(\omega)$
contains  balls of arbitrary large radius as subgraphs. There are
two types of survival for  infinite-type Galton Watson processes.
We say that the process survives globally if $Z_t>0$ for all $t$ and
survives locally if $Z_t(i)>0$ for infinitely many $t$ and all
($\Leftrightarrow$ some) $i$, compare with \cite{gantert:08}. Now,
Corollary 2.6 in \cite{mueller:ejp} implies that $Z_t$ survives
locally if and only if $\lambda>1/\rho(A_\omega)=1/(2d)$. Finally,
it remains to show that the process survives locally if and only
if it survives globally. But for $\lambda\leq 1/(2d)$, observe that
$\sum_y m_\omega^{(n)}(x,y)\leq 1$ for all $x$ and $n$. Hence, the
expected number of particles in generation $n$ is bounded by $1$.
Since $Z_t$ either converges to $0$ or $\infty$ we obtain that the
process does not survive globally if $\lambda\leq 1/(2d).$
\begin{flushright}$\Box$\end{flushright}

\subsection{Proof of Proposition \ref{prop:non_ergod}}

The non-ergodicity part in the supercritical case $\lambda^+(M)>1$
is obvious due to Theorem \ref{thm:rec}. Nevertheless, we give an
alternative proof that uses directly the definition of
$\lambda^+(M).$ This method works also for the critical case
$\lambda^+(M)=1$ when the supremum is attained. Furthermore, the
proof might be useful in order to understand the behaviour between
$\lambda^+(M)$ and $r_\infty(M)$.

\noindent\emph{Proof of Proposition \ref{prop:non_ergod}.}
The first steps are quite standard and use the tree
structure of our process, compare also with \cite{nagnibeda:02}.
The random walk is positive recurrent if and only if the
reversible (and stationary) measure $m$ defined in (\ref{eq:rev_m}) is finite, that is, if
$m(\T)=\sum_{x\in\T} m(x)<\infty$. For each $i\in  G$,
we construct a tree $\T_i$: it consists of the cone $\T^x$ of $\T$
with $x\in\T$ and $\tau(x)=i$ which is connected by a
(nondirected) single edge from $x$ to an additional vertex $\orig_i$. On $\T_i$ we consider the random walk with the same
transition probabilities as in $\T$ but where the probability from
$\orig_i$ to $x$ is $1$ and from $x$ to $\orig_i$ is $p(-i)$.
This random walk is obviously reversible with
reversible measure $m_i$, which is defined analogously to Equation
(\ref{eq:rev_m}). We can express $m(\T)$ in terms of the
measures $m_i$ of the  subtrees $\T_i$:
\begin{equation}\label{eq:mT}
m(\T)=\sum_{x\in\T} p(\orig,x) m_{\tau(x)}(\T_{\tau(x)}).
\end{equation}
We  approximate $\T_i$ with  finite subtrees $\T_i^n$ of height
$n$. The sequence $\bigl(m_i(\T_i^n)\bigr)_{n\in\N_0}$ is
increasing with limit $m_i(\T_i)$. We obtain the inductive
formula: $m_i(\T_i^0)=1$ and for each $i\in\N$
$$
m_i(\T_i^n)-1=\frac1{p(-i)} \biggl(1+ \sum_{j\in G} p(i,j)
  \bigl(m_j(\T_j^{n-1}) -1\bigr) \biggr).
$$
We can write the last equation in vector form using
$\m_n:=\bigl(m_i(\T_i^n)-1\bigr)_{i\in  G}$ and
$\mathbf{p}=\bigl(1/p(-i)\bigr)_{i\in G}$ as $\m_n=\p+M \m_{n-1}$. Thus,
$$
\m_n=\p+M\p+M^2\p+\cdots+ M^{n-1} \p
$$
and the sequence $(\m_n)_{n\in\N_0}$ will converge componentwise
to a finite limit if and only if $\sum_{n\geq 0} M^n\mathbf{p}$
converges in each component. If $\lambda^+(M)>1$ there is some
$\lambda^*>1$ and \mbox{$0<\mathbf{f}^*\in\ell_\infty$} such that
$M\mathbf{f}^*\geq \lambda^*\mathbf{f}^*$. Hence, there is some
$c>0$ such that \mbox{$\p=c\mathbf{f}^*+(\p-c\mathbf{f}^*)$} and
$(\p-c\mathbf{f}^*)>0$. At this point we need $p(-i)<1-\eps$
of Assumption (\ref{equ:p-i}). Finally,
$$
\sum_{n\geq 0} M^n \p \geq c \sum_{n\geq 0} (\lambda^*)^n
\mathbf{f}^*=\infty
$$ and non-ergodicity follows.
If $\lambda^+(M)=1$ and the supremum in the definition of
$\lambda^+(M)$ is attained, non-ergodicity follows analogously.

In order to show that $r_\infty:=r_\infty(M)<1$ implies ergodicity
we have to show that the sum $\sum_{n\geq 0} M^n \p<\infty$ is finite in every component if
\mbox{$r_\infty <1$.} Recall that $r_\infty=\lim_{n\to\infty}
\sqrt[n]{\|A^n\|_\infty}$. This provides that for every small $\eps>0$
there is some $N_\eps\in\N$ such that $1>r_\infty+\eps \geq
\sqrt[n]{\|A^n\|_\infty}$ for all $n\geq N_\eps$. Due to
Assumption (\ref{equ:p-i}) we get for every $i\in G$
and for $\eps$ small enough and $n\geq N_\eps$
 $$
\left(\sum_{n\geq 0} M^n \p\right) (i)
\leq \left\| \sum_{n\geq 0}
  M^n\p\right\|_\infty
\leq \sum_{n\geq 0}
\|M^n\p\|_\infty
\leq \sum_{n=0}^{N_\eps-1} \|M^np\|_\infty +
\|\p\|_{\infty}  \sum_{n\geq N_\eps} \bigl(r_\infty +\eps\bigr)^n,
$$ which is finite and independent of $i$.
\begin{flushright}$\Box$\end{flushright}

\subsection{Proof of Theorem \ref{thm:rateofescape}}

\begin{proof}[Proof of Theorem \ref{thm:rateofescape}.1]
Since the proof is an adaption of the arguments in
\cite{nagnibeda:02} we just give a  sketch.
 If $\Lambda<\infty$ then
$$
\frac{\e{k}}{k} \xrightarrow{k\to\infty} \Lambda=\sum_{i\in G}
  \nu(i)\,\frac{F'(-i|1)}{F(-i)} \quad \textrm{ almost surely.}
$$
With \cite{nagnibeda:02} we get $\ell_0:=\lim_{n\to\infty}
|X_n|/n= \lim_{k\to\infty} k/\e{k} = \Lambda^{-1}$ if
$\Lambda<\infty$.
\par
By assumption, the process $(\W{k})_{k\in\N_0}$ is positive recurrent with
invariant probability \mbox{measure $\nu$.} Thus, the process
$\bigl(\tau(\W{k-1}),\tau(\W{k})\bigr)_{k\in\N}$ has the invariant
probability measure $\nu_1(i,j)=\nu(i) q(i,j)$. An application of
the ergodic theorem for positive recurrent Markov chains yields
\begin{equation}\label{l0-convergence}
\frac{1}{n} \sum_{k=1}^n w\bigl(\tau(\W{k-1}),\tau(\W{k})\bigr) = \frac{l(\mathbf{W}_n)}{n}
\xrightarrow{n\to\infty} \int w(i,j)\,d\nu_1(i,j) = \sum_{i,j\in  G}
w(i,j) \nu_1(i,j).
\end{equation}
Observe that the sum on the right hand side is finite, since
$w(\cdot,\cdot)$ is bounded. In the case $\Lambda <\infty$ we
obtain, analogously to \cite{nagnibeda:02}, with $\mathbf{k}(n) :=
\max\{k\in\N_0| \e{k}\le n\}$
$$
\lim_{n\to\infty} \frac{l(X_n)}{n} = \lim_{n\to\infty}
 \frac{l(W_{\mathbf{k}(n)})}{\mathbf{k}(n)}\frac{\mathbf{k}(n)}{\mathbf{e}_{\mathbf{k}(n)}} \frac{\e{\mathbf{k}(n)}}{n}=
\ell_0  \lim_{k\to\infty} \frac{l(\W{k})}{k} \ \textrm{ almost
surely},
$$
since $\e{\mathbf{k}(n)}/n$ converges to $1$. The claim follows
now with (\ref{l0-convergence}).
\end{proof}

\begin{proof}[Proof of Theorem \ref{thm:rateofescape}.2]
Denote by $R$ the radius of convergence of $G(o,o|z)$. We have
$R>1$, since the spectral radius of $(X_n)_{n\in\N_0}$ is strictly
smaller than $1$ by assumption. Define for $x\in\T$ and $z\in\mathbb{C}$:
\begin{eqnarray*}
U(x,x|z)& := &\sum_{n\geq 1} \P\bigl[X_n=x,\forall m\in\{1,\dots,n-1\}: X_m\neq x
\,\bigl|\, X_0=x \bigr]z^n\\
&=& \sum_{j\in G}  p(i,j)\,z\,F(-j|z) +p(-i)\,z\, F(x^-,x|z).
\end{eqnarray*}

The proof splits up into the two following lemmas.

\begin{lem}\label{lem:g-estimate}
For $r\in [1,R)$ and all $x\in\T$ we have $G(x,x|r) \leq
1/(1-r/R)$.
\end{lem}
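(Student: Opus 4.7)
The plan is to reduce the lemma to the uniform pointwise estimate
$$
p^{(n)}(x,x) \;\le\; (1/R)^n \qquad \textrm{ for all } x \in \T,\; n \in \N_0,
$$
since once this is in hand, for $r \in [1,R)$ one simply sums the geometric series
$$
G(x,x|r) \;=\; \sum_{n \ge 0} p^{(n)}(x,x)\,r^n \;\le\; \sum_{n \ge 0} (r/R)^n \;=\; \frac{1}{1-r/R}.
$$

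The pointwise bound will come from reversibility. By (\ref{eq:rev_m}) the chain $(X_n)_{n\in\N_0}$ is reversible with respect to $m$, so the transition operator $P$ extends to a bounded self-adjoint operator on $\ell^2(\T,m)$. Writing $\rho$ for its operator norm, a standard fact for reversible irreducible Markov chains (see e.g.\ \cite[Chapter II]{woess}) identifies $\rho$ with $\limsup_{n} p^{(n)}(\orig,\orig)^{1/n}$, which in turn is the reciprocal of the radius of convergence of $G(\orig,\orig|z)$; hence $\rho = 1/R$.

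Fix now $x \in \T$ and let $\delta_x$ denote the indicator function of $\{x\}$, so that $\|\delta_x\|_{\ell^2(m)}^2 = m(x)$. Self-adjointness gives, for every even exponent,
$$
p^{(2n)}(x,x)\,m(x) \;=\; \langle P^{2n}\delta_x,\delta_x\rangle_m \;=\; \|P^n\delta_x\|_m^2 \;\le\; \rho^{2n}\,m(x),
$$
so $p^{(2n)}(x,x) \le \rho^{2n}$. For odd exponents one combines this with Cauchy--Schwarz in $\ell^2(m)$:
$$
p^{(n)}(x,x)\,m(x) \;=\; \langle P^n\delta_x,\delta_x\rangle_m \;\le\; \|P^n\delta_x\|_m\,\|\delta_x\|_m \;=\; \sqrt{p^{(2n)}(x,x)\,m(x)}\,\sqrt{m(x)} \;\le\; \rho^n\,m(x),
$$
and dividing by $m(x)>0$ yields the uniform bound $p^{(n)}(x,x) \le \rho^n = (1/R)^n$ for every $n$.

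No step is really an obstacle: once reversibility is invoked, everything reduces to the identification of the $\ell^2(m)$-operator norm of $P$ with $1/R$, which is classical for reversible irreducible chains, plus a one-line Cauchy--Schwarz to handle odd exponents and a geometric summation.
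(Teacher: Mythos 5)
Your proof is correct, but it follows a genuinely different route from the paper's. The paper works directly with the first-return generating function $U(x,x|z)$: from $G(x,x|r)=1/(1-U(x,x|r))<\infty$ on $[1,R)$ one gets $U(x,x|r)<1$ there, and then, since $U(x,x|\cdot)$ is a power series with non-negative coefficients (so increasing and convex) vanishing at $0$, a chord comparison with endpoints $0$ and $s\uparrow R$ gives $U(x,x|r)\leq r/R$, which is exactly the stated bound on $G$. This is entirely elementary and uses no structure beyond irreducibility. You instead exploit the reversibility of the tree walk: identifying the $\ell^2(m)$-operator norm of $P$ with $\rho(P)=1/R$ via the classical Woess-type theorem for reversible irreducible chains, you deduce the uniform pointwise bound $p^{(n)}(x,x)\leq R^{-n}$ (self-adjointness for even $n$, Cauchy--Schwarz for odd $n$) and then sum the geometric series. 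Your route yields a strictly stronger intermediate statement (a uniform bound on all $n$-step return probabilities, not just on the Green function) and is cleaner if one is comfortable with operator theory, but it leans on reversibility and the cited spectral identification; the paper's argument is more self-contained, staying within the generating-function framework already set up for Theorem \ref{thm:rateofescape}, and would apply verbatim even without reversibility.
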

\begin{proof}
For every $x\in\T$ with $\tau(x)=i$, we have
\begin{equation}\label{g-u-equation}
\infty > G(x,x|r) =  \sum_{n\geq 0} U(x,x|r)^n =   \frac{1}{1-U(x,x|r)}\  \textrm{ for all } r\in[1,R),
\end{equation}
that is, $U(x,x|r)<1$ for all $r\in[1,R)$. Since $U(x,x|0)=0$ and
$U(x,x|z)$ is continuous, increasing and convex we have
$$
U(x,x|r) \leq \frac{r}{R},
$$
that is, $G(x,x|r) \leq 1/(1-r/R)$.
\end{proof}
\begin{lem}
There is a constant $C_F$ such that $F'(-i|1)\leq C_F$ for all $i\in G$.
\end{lem}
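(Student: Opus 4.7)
My plan is to reduce the claim to a uniform bound on $\overline{G}_i'(1)$ and then extract that bound from Lemma \ref{lem:g-estimate} via a convexity (monotone coefficient) argument. Since $F(-i|z)=\overline{G}_i(z)\,p(-i)\,z$, differentiating at $z=1$ gives
$$
F'(-i|1) = p(-i)\,\overline{G}_i(1) + p(-i)\,\overline{G}_i'(1) = F(-i) + p(-i)\,\overline{G}_i'(1) \leq 1 + \overline{G}_i'(1),
$$
using $F(-i)\leq 1$ and $p(-i)\leq 1$. Thus it suffices to bound $\overline{G}_i'(1)$ uniformly in $i$.

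The next step is to observe that $\overline{G}_i(z)\leq G(x,x|z)$ for every $x\in\T$ with $\tau(x)=i$ and every $z\geq 0$. This follows coefficient-wise from the definitions, since the event $\{X_n=x,\ \forall m<n:X_m\neq x^-\}$ is contained in $\{X_n=x\}$. Combined with Lemma \ref{lem:g-estimate}, we obtain the uniform (in $i$) estimate
$$
\overline{G}_i(r) \leq G(x,x|r) \leq \frac{R}{R-r}\quad\textrm{ for all } r\in[1,R).
$$
Because $R>1$ by assumption on the spectral radius, we can fix any $r_0\in(1,R)$ and conclude that $\overline{G}_i(r_0)\leq R/(R-r_0)$ uniformly in $i\in G$.

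To pass from values to derivatives I will use the fact that $\overline{G}_i(z)$ has non-negative Taylor coefficients, hence is convex and increasing on $[0,R)$. The secant bound then yields
$$
\overline{G}_i'(1) \leq \frac{\overline{G}_i(r_0)-\overline{G}_i(1)}{r_0-1} \leq \frac{R}{(R-r_0)(r_0-1)},
$$
and this upper bound is independent of $i$. Plugging back, we get
$$
F'(-i|1) \leq 1 + \frac{R}{(R-r_0)(r_0-1)} =: C_F,
$$
which is the desired uniform bound.

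I do not foresee a genuine obstacle: the only subtlety is realizing that the dominating generating function $G(x,x|z)$ is precisely the object for which the preceding lemma supplies a uniform bound, so that no recursive control of $F'(-j|1)$ in terms of $F'(-i|1)$ through the differentiated (f-equations) system is needed. This makes the argument go through uniformly in $i$ without having to invoke any spectral properties of $M$ beyond the assumption $R>1$.
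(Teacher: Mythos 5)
Your proof is correct, and it is genuinely shorter than the paper's. The paper works through the generating function $U(x,x|z)$: it uses $U'(x,x|z)=G'(x,x|z)/G(x,x|z)^2\leq G'(x,x|z)$, bounds $G'(x,x|1)$ by comparing $G(x,x|\cdot)$ with an explicit affine function via Lemma \ref{lem:g-estimate} and convexity, expands $U(x,x|z)=p\bigl(-\tau(x)\bigr)\,z\,F(x^-,x|z)+\sum_{j\in G}p\bigl(\tau(x),j\bigr)\,z\,F(-j|z)$, and extracts the single summand $p\bigl(\tau(x),i\bigr)F'(-i|1)\leq U'(x,x|1)$; this last step requires the uniform lower bound $p(x,y)\geq\eps_0$ on transition probabilities in order to divide through. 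You instead differentiate the identity $F(-i|z)=\overline{G}_i(z)\,p(-i)\,z$ directly, reduce to a uniform bound on $\overline{G}_i'(1)$, and compare $\overline{G}_i(z)\leq G(x,x|z)$ coefficient-wise, so that Lemma \ref{lem:g-estimate} plus a one-line secant estimate finish the job. This eliminates the detour through $U$ and, notably, does not invoke $\eps_0$; it needs only $R>1$. Both arguments hinge on the same two ingredients, namely the uniform Green-function bound of Lemma \ref{lem:g-estimate} and a convexity (secant) estimate for a derivative at $z=1$, so the underlying mechanism is the same, but you exploit the $\overline{G}_i$--$F$ relation to bypass the paper's manipulations with $U$ and obtain a slightly cleaner constant.
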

\begin{proof}
With Equation (\ref{g-u-equation}) we get
$$
U'(x,x|z) = \frac{G'(x,x|z)}{G^2(x,x|z)} \leq G'(x,x|z).
$$
Choose any $\eps\in(0,R-1)$ and define
$$
h(z) := \frac{1}{\eps} \frac{R(z-1)}{R-1-\eps}.
$$
We have $h(1)=0$ and $h(1+\eps)=1/(1-(1+\eps)/R)$. Since $G(x,x|z)$ is
increasing and convex in $[0,R)$ we get with Lemma \ref{lem:g-estimate} the
inequality $G'(x,x|1)\leq h'(1)$, and thus $U'(x,x|1)\leq h'(1)$.
\par
Let be $i\in G$ and choose any $x\in\T$ such that $p(\tau(x),i)>0$. Then:
$$
U(x,x|z) = p\bigl(-\tau(x)\bigr)z F(x^-,x|z) + \sum_{j\in G}
p\bigl(\tau(x),j\bigr) z F(-j|z).
$$
Differentiating yields
$$
U'(x,x|z) = p\bigl(-\tau(x)\bigr)\bigl(F(x^-,x|z)+ z F'(x^-,x|z)\bigr) + \sum_{j\in G}
p\bigl(\tau(x),j\bigr) \bigr(F(-j|z)+ z F'(-j|z)\bigr).
$$
Thus,
$$
U'(x,x|1)\geq p\bigl(\tau(x),i\bigr) F'(-i|1),
$$
or equivalently,
$$
F'(-i|1) \leq \eps_0^{-1} h'(1).
$$
\end{proof}

Finally,  $F(-i|1)\geq \eps_0$ together with the last lemma imply
that $\Lambda <\infty$ if $R>1$.
\end{proof}
\begin{proof}[Proof of Theorem \ref{thm:rateofescape}.3]
We now turn to the case $\Lambda=\infty$. Define for $N\in\N$ the function
$g_N:  G \times \N \to\N$ by $g_N(i,n):=n \land N$. Then obviously
\begin{equation}\label{gN-inequality}
\frac{1}{k}\sum_{l=1}^k g_N\bigl(\tau(\W{l}),\e{l}-\e{l-1}\bigr) \leq
\frac{\e{k}-\e{0}}{k} = \frac{\e{k}}{k}.
\end{equation}
This inequality holds for every $N\in\N$. The process
$\bigl(\tau(\W{l}),\e{l}-\e{l-1}\bigr)_{l\in\N}$ is also a positive recurrent
Markov chain;
compare with \cite{nagnibeda:02}. For each $N$ there is a constant $C_N$ such
that the left side of (\ref{gN-inequality})
converges to $C_N$ for almost every realisation of $(X_n)_{n\in\N_0}$. The
sequence $(C_N)_{N\in\N}$ is strictly increasing and diverges to $\infty$,
since $\Lambda=\infty$ and $g_N(i,n) \to g(i,n)$ for $N\to\infty$ with $g(i,n):=n$.
Thus, $\e{k}/k$ tends to infinity. This
yields $\liminf_{n\to\infty} |X_n|/n=0$ almost surely since
$$
0\leq \liminf_{n\to\infty} \frac{|X_n|}{n} \leq
\liminf_{n\to\infty}
\frac{|X_{\e{\mathbf{k}(n)}}|}{\e{\mathbf{k}(n)}} \leq
\liminf_{n\to\infty}
\frac{|X_{\e{\mathbf{k}(n)}}|}{\mathbf{k}(n)}\frac{\mathbf{k}(n)}{\e{\mathbf{k}(n)}}=0 \quad \textrm{ almost surely.}
$$
\end{proof}

\subsection{Proof of Theorems \ref{th:entropy} and Corollary
  \ref{cor:entropy2}}

\begin{proof}[Proof of Theorem \ref{th:entropy}]
Define for $x,y\in\T$ and $z\in\mathbb{C}$
\begin{eqnarray*}
L(x,y|z) & := &\sum_{n\geq 0} \P\bigl[X_n=y,\forall m\in \{1,\dots,n\}:
X_m\neq x \mid X_0=x\bigr]z^n.
\end{eqnarray*}
 If $y$ is a successor of $x$ in $\T$, then
\begin{equation}\label{l-equation}
L(x,y|z) = p(x,y)  z \overline{G}_{\tau(y)}(z).
\end{equation}
We have the following important equations, which follow by conditioning on the
last visit of $o$, the first visit of $x$ respectively:
\begin{equation}\label{gl-equations}
G(o,x|z) = G(o,o|z) L(o,x|z) = F(o,x|z) G(x,x|z).
\end{equation}
If $y\in\T$ lies on the unique geodesic from $x\in\T$ to $w\in\T$, then
$$
L(x,w|z) = L(x,y|z) L(y,w|z).
$$
Observe that the generating functions $L(\cdot,\cdot|z)$,
$\overline{G}_i(z)$, $G(\cdot,\cdot|z)$ have radii of convergence
of at least $R>1$, since the spectral radius of the random walk is
strictly smaller than $1$. Define for $x\in\T$
$$
l(x) := -\log L(o,x|1) = -\sum_{i=1}^{|x|} \log
L(x_{i-1},x_i|1),
$$ 
where $x_i$ is the unique element
on the geodesic from $o$ to $x$ at distance $i$ from $o$. This length function
arises from the weight function on the edges of $G$ defined by $w(i,j):=-\log L(x,y|1)$, where $x\in\T$ with
$\tau(x)=i$ and $y$ is a  successor of $x$ of type $j$. This weight function is
well-defined, since all subtrees $\T^{x_1}$ and $\T^{x_2}$ with
$\tau(x_1)=\tau(x_2)$ are isomorphic as
rooted trees. We claim
that the rate of escape w.r.t. the length function $l$ exists and
equals the asymptotic entropy. The technique of the proof which we
will give was motivated by \cite{benjamini-peres94}, where it is
shown that the asymptotic entropy of random walks on finitely generated groups
equals the rate of escape w.r.t. the Green metric.
\par
The proof of Theorem \ref{th:entropy} is  split  up into the
following lemmas.

\begin{lem}
$h:=\lim_{n\to\infty} l(X_n)/n$ exists and is non-negative.
\end{lem}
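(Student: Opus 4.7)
The plan is to observe that the function $l$ is a length function in the sense of Section \ref{sec:rateofescape}, and then invoke Theorem \ref{thm:rateofescape}.1 directly. Indeed, applying the factorization $L(x,y|1) = p(x,y)\overline{G}_{\tau(y)}(1)$ to successive edges of the geodesic $\langle o=x_0,x_1,\dots,x_{|x|}=x\rangle$, the multiplicative identity $L(o,x|1)=\prod_{i=1}^{|x|} L(x_{i-1},x_i|1)$ yields
$$
l(x) = -\log L(o,x|1) = \sum_{i=1}^{|x|} w\bigl(\tau(x_{i-1}),\tau(x_i)\bigr),\qquad w(i,j):=-\log\bigl(p(i,j)\overline{G}_j(1)\bigr).
$$
Since $p(i,j)$ and $\overline{G}_j(1)$ depend only on the types $i,j$, the weight function $w$ is well-defined on $G\times G$, so $l$ falls exactly into the framework of Subsection \ref{sec:rateofescape}.

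Next I would verify that $w$ is bounded. The hypothesis that the spectral radius of $(X_n)_{n\in\N_0}$ is strictly less than $1$ means $R>1$, so Lemma \ref{lem:g-estimate} gives $\overline{G}_j(1)\le G(y,y|1) \le 1/(1-1/R)=:C_R<\infty$ uniformly in $j$, while trivially $\overline{G}_j(1)\ge 1$. Combined with $\eps_0\le p(i,j)\le 1$ coming from the standing assumption that transition probabilities are bounded away from $0$, we get
$$
\eps_0 \le L(x,y|1) \le C_R,
$$
so $w$ takes values in the bounded interval $[-\log C_R,-\log\eps_0]$. The two standing hypotheses of Theorem \ref{thm:rateofescape}.1 now hold: $Q$ is positive recurrent by assumption, and $\Lambda<\infty$ by Theorem \ref{thm:rateofescape}.2. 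Hence $h=\lim_{n\to\infty} l(X_n)/n$ exists almost surely, with the explicit formula \eqref{eq:rate_of_escape} for $h$.

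For non-negativity, I would use the second identity in \eqref{gl-equations}, $G(o,x|1)=G(o,o|1)\,L(o,x|1)$, together with the trivial bound $G(o,x|1)\le G(x,x|1)$. Applying Lemma \ref{lem:g-estimate} again, $G(x,x|1)\le C_R$ uniformly in $x$, so
$$
L(o,X_n|1) \le \frac{C_R}{G(o,o|1)},
$$
which gives $l(X_n)\ge \log\bigl(G(o,o|1)/C_R\bigr)$, a finite constant that does not depend on $n$. Dividing by $n$ and passing to the limit produces $h\ge 0$.

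The only nontrivial point is the initial identification of $l$ as a length function coming from a \emph{bounded} weight $w$; once this is observed, existence of $h$ is an immediate consequence of the already proved Theorem \ref{thm:rateofescape}, and the non-negativity reduces to a one-line comparison of Green functions. No separate control of the tails of $l(X_n)$ is required, because the uniform bound on $G(x,x|1)$ provided by the subunital spectral radius assumption does all the work.
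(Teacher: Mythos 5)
Your proposal is correct and follows essentially the same route as the paper: identify $l$ as a length function with bounded weight $w(i,j)=-\log\bigl(p(i,j)\,\overline{G}_j(1)\bigr)$, check boundedness via Lemma~\ref{lem:g-estimate} and the lower bound $p\geq\eps_0$, invoke Theorem~\ref{thm:rateofescape} (with $\Lambda<\infty$ from part~2) for existence, and then use the factorization in~(\ref{gl-equations}) together with $G(x,x|1)\leq 1/(1-1/R)$ for non-negativity. The only cosmetic difference is the packaging of the non-negativity step: the paper writes $h=\lim_n -\frac{1}{n}\log F(o,X_n|1)$ after showing the $G(X_n,X_n|1)$ and $G(o,o|1)$ terms vanish, while you bound $l(X_n)$ below by a fixed constant $\log\bigl(G(o,o|1)/C_R\bigr)$; both rest on the same Green-function comparison.
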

\begin{proof}
Observe that $L(x,y)\geq p(x,y)\geq \varepsilon_0$ whenever
$p(x,y)>0$. If $y\in\T$ is a successor of $x\in\T$, then we obtain
with Equation (\ref{l-equation}) and Lemma \ref{lem:g-estimate}
$$
L(x,y|1) = p(x,y) \,\overline{G}_{\tau(y)}(1) \leq p(x,y) / (1-1/R) \leq
(1-\varepsilon_0) / (1-1/R).
$$
Hence, the functions $L(x^-,x|1)$ are uniformly bounded in $x\in\T\setminus \{o\}$. Theorem
\ref{thm:rateofescape} provides that the rate of escape $h$ with
respect to $l$ exists. By
(\ref{gl-equations}), we get also
$$
h  =  \lim_{n\to\infty} -\frac{1}{n} \log F(o,X_n|1) - \frac{1}{n}
\log G(X_n,X_n|1) + \frac{1}{n} \log G(o,o|1).
$$
With Lemma \ref{lem:g-estimate} we have $1\leq G(x,x|1) \leq
1/(1-1/R)$ for every $x\in\T$. Since $F(o,X_n|1)\leq 1$ we get
$$
h=\lim_{n\to\infty} -\frac{1}{n}\log F(o,X_n|1) \geq 0.
$$
\end{proof}
By (\ref{gl-equations}), we can rewrite $h$ as

\begin{equation}\label{eq:h_in_termsof_G}
h=\lim_{n\to\infty} -\frac{1}{n} \log L(o,X_n|1) =
\lim_{n\to\infty} -\frac{1}{n} \log \frac{G(o,X_n|1)}{G(o,o|1)} =
\lim_{n\to\infty} -\frac{1}{n} \log G(o,X_n|1).
\end{equation}
Since
$$
G(o,X_n|1) = \sum_{m\geq 0} p^{(m)}(o,X_n) \geq p^{(n)}(o,X_n) = \pi_n(X_n),
$$
we have
\begin{equation}\label{equ:liminf-h}
\liminf_{n\to\infty} -\frac{1}{n} \log \pi_n(X_n) \geq h.
\end{equation}
The next aim is to prove $\limsup_{n\to\infty}
-\frac{1}{n}\E\bigl[\log \pi_n(X_n)\bigr] \leq h$. For this
purpose, we need the following lemma:
\begin{lem}
For every $r\in (1,R)$, $x\in\T$ and $m\in\N$ we have
$p^{(m)}(o,x)\leq G(o,x|r)  r^{-m}$.
\end{lem}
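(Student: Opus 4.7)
The plan is to exploit the fact that $G(o,x|z)$ is a power series in $z$ with non-negative coefficients, namely $G(o,x|z)=\sum_{n\geq 0} p^{(n)}(o,x)\,z^n$, together with the assumption $r\in(1,R)$ which ensures absolute convergence of this series at $z=r$.

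First I would record that, since the spectral radius of $(X_n)_{n\in\N_0}$ is strictly less than $1$ by the standing assumption of Theorem \ref{th:entropy}, all functions $G(o,x|z)$ have radius of convergence at least $R>1$, so $G(o,x|r)$ is a well-defined finite positive number for $r\in(1,R)$. Then, because every coefficient $p^{(n)}(o,x)$ is non-negative, the single term $p^{(m)}(o,x)\,r^m$ is bounded above by the full sum:
$$
p^{(m)}(o,x)\,r^m \;\leq\; \sum_{n\geq 0} p^{(n)}(o,x)\,r^n \;=\; G(o,x|r).
$$
Dividing by $r^m>0$ yields the claimed inequality $p^{(m)}(o,x)\leq G(o,x|r)\,r^{-m}$.

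There is no real obstacle here; the statement is essentially the observation that any nonnegative term of a convergent series is at most the sum. The lemma is formulated in this way because it will later be combined with the estimate $G(o,x|r)\le G(o,o|r)$ (via a factorisation like (\ref{gl-equations})) and optimised over $r\in(1,R)$ to obtain an exponential upper bound $p^{(m)}(o,x)\le C\,r^{-m}$, which will be the crucial ingredient for controlling $-\frac{1}{n}\E[\log\pi_n(X_n)]$ from above and matching the lower bound (\ref{equ:liminf-h}).
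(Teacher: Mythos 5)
Your proof is correct, and it is genuinely simpler than the paper's. The paper proves the lemma by Cauchy's integral formula: it writes $p^{(m)}(o,x)=\frac{1}{2\pi i}\oint_{C_r}G(o,x|z)\,z^{-m}\,\frac{dz}{z}$ (extracting the $m$-th Taylor coefficient by a contour integral over the circle of radius $r$, justified by Fubini since the series converges absolutely there) and then bounds the integrand by $|G(o,x|z)|\le G(o,x|r)$ on $|z|=r$ to get the Cauchy estimate $p^{(m)}(o,x)\le G(o,x|r)\,r^{-m}$. You instead observe that because all coefficients $p^{(n)}(o,x)$ are non-negative, the single term $p^{(m)}(o,x)\,r^m$ is bounded by the full sum $G(o,x|r)$, and divide through by $r^m$. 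Both arguments are valid; yours avoids complex analysis entirely and is the more economical route here, exactly because the coefficients are probabilities and hence non-negative (a fact the contour-integral method does not need but which you exploit directly). The paper's method is the standard Cauchy coefficient estimate and would work for arbitrary complex coefficients, but that generality buys nothing in this setting. One small remark on your closing speculation about how the lemma is later used: the paper does not use $G(o,x|r)\le G(o,o|r)$ but rather the factorisation $G(o,x|r)=G(o,o|r)\prod_i L(x_{i-1},x_i|r)$ to obtain $G(o,x|r)\le G(o,o|r)\bigl(r/(1-r/R)\bigr)^{|x|}$; this is a growth bound in $|x|$, not simply a monotonicity statement. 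This does not affect the correctness of your proof of the lemma itself.
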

\begin{proof}
Denote by $C_r$ the circle with radius $r$ in the complex plane centered at
$0$. A straightforward computation shows that
$$
\frac{1}{2\pi i} \oint_{C_r} z^m \frac{dz}{z} = \delta_{m,0}.
$$
An application of Fubini's Theorem yields
\begin{eqnarray*}
\frac{1}{2\pi i} \oint_{C_r} G(o,x|z)\,z^{-m} \frac{dz}{z} & = &
\frac{1}{2\pi i} \oint_{C_r} \sum_{n\geq 0} p^{(n)}(o,x) z^n\,z^{-m}
\frac{dz}{z}\\
&=&
\frac{1}{2\pi i} \sum_{n\geq 0} p^{(n)}(o,x) \oint_{C_r} z^{n-m}
\frac{dz}{z} = p^{(m)}(o,x).
\end{eqnarray*}
Since $G(o,x|z)$ is analytic on $C_r$, $|G(o,x|z)|\leq G(o,x|r)$ for all
$|z|=r$. Thus,
$$
p^{(m)}(o,x) \leq \frac{1}{2\pi} r^{-m-1}  G(o,x|r) 2\pi r =
G(o,x|r)  r^{-m}.
$$
\end{proof}
Let be $x\in\T$ and let $x_i$ be the unique element on the geodesic from $o$ to
$x$ at distance $i$ \mbox{from $o$.} For $r<R$, iterated applications of equations
(\ref{l-equation}) and (\ref{gl-equations}) provide
\begin{eqnarray*}
G(o,x|r) & = & G(o,o|r) \,\prod_{i=1}^{|x|} L(x_{i-1},x_i|r)\\
& = & G(o,o|r) \,\prod_{i=1}^{|x|} p(x_{i-1},x_i)\,r\, \overline{G}_{\tau(x_{i})}(r)\\
&\leq & G(o,o|r) \bigl(r/(1-r/R)\bigr)^{|x|}.
\end{eqnarray*}
Thus,
\begin{equation}\label{eq:p_approx}
p^{(m)}(o,X_n) \leq G(o,o|r) \Bigl(\frac{r}{1-r/R}\Bigr)^n r^{-m}.
\end{equation}
We now need the following technical lemma:
\begin{lem}\label{cut-lemma}
Let $(A_n)_{n\in\N}$, $(a_n)_{n\in\N}$, $(b_n)_{n\in\N}$ be sequences of
positive numbers with $A_n=a_n+b_n$. Assume that $\lim_{n\to\infty}
-\frac{1}{n}\log A_n=c \in [0,\infty)$ and that $\lim_{n\to\infty} b_n/q^n
= 0$ for all $q\in(0,1)$. Then $\lim_{n\to\infty} -\frac{1}{n}\log a_n=c$.
\end{lem}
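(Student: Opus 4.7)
The plan is to prove the two inequalities $\liminf_{n\to\infty} -\frac1n\log a_n \ge c$ and $\limsup_{n\to\infty} -\frac1n\log a_n \le c$ separately. The first direction is essentially free: since $a_n \le A_n = a_n+b_n$, taking $-\frac1n\log$ reverses the inequality and the hypothesis on $A_n$ gives
$$
\liminf_{n\to\infty} -\frac{1}{n}\log a_n \;\ge\; \lim_{n\to\infty} -\frac{1}{n}\log A_n \;=\; c.
$$

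For the reverse inequality I would argue by contradiction. Assume there exists $\varepsilon>0$ and a subsequence $(n_k)_{k\in\N}$ such that $-\frac{1}{n_k}\log a_{n_k}\ge c+\varepsilon$, equivalently $a_{n_k}\le e^{-(c+\varepsilon)n_k}$. Now choose $q:=e^{-(c+\varepsilon/2)}$. Since $c\ge 0$ and $\varepsilon>0$ we have $q\in(0,1)$, and so the hypothesis on $b_n$ gives $b_n/q^n \to 0$, whence $b_{n_k}\le e^{-(c+\varepsilon/2)n_k}$ for all sufficiently large $k$. Combining the two estimates,
$$
A_{n_k} \;=\; a_{n_k}+b_{n_k} \;\le\; e^{-(c+\varepsilon)n_k} + e^{-(c+\varepsilon/2)n_k} \;\le\; 2\,e^{-(c+\varepsilon/2)n_k}
$$
for large $k$, which forces
$$
-\frac{1}{n_k}\log A_{n_k} \;\ge\; c+\frac{\varepsilon}{2}-\frac{\log 2}{n_k} \;\longrightarrow\; c+\frac{\varepsilon}{2},
$$
contradicting $-\frac{1}{n}\log A_n\to c$. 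Hence $\limsup_{n\to\infty} -\frac1n\log a_n\le c$, and combined with the first step this yields the claim.

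There is no real obstacle here; the only mild care needed is to ensure that the auxiliary base $q$ chosen for the hypothesis on $b_n$ lies in $(0,1)$, which is why I shift by $\varepsilon/2$ rather than $\varepsilon$ and why it matters that $c\ge 0$ and $\varepsilon>0$. The essential mechanism is that the hypothesis on $b_n$ says $b_n$ decays super-exponentially, so it cannot dominate $A_n$ unless $a_n$ already decays at the same rate as $A_n$.
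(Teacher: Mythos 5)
Your proof is correct, but it is organized differently from the one in the paper, and it is worth noting how the two relate. The paper first asserts (labelling it ``clear'') that $b_n<a_n$ for all $n\geq N$ and then runs a short sandwich argument: from $b_n<a_n$ one gets
$$
-\frac{1}{n}\log A_n \;\leq\; -\frac{1}{n}\log a_n \;\leq\; -\frac{1}{n}\log\tfrac12 \;-\; \frac{1}{n}\log A_n,
$$
and both outer terms converge to $c$. You instead split into $\liminf$ and $\limsup$: the lower bound $\liminf_n -\frac1n\log a_n\geq c$ comes for free from $a_n\leq A_n$, and the upper bound is argued by contradiction using the super-exponential decay of $b_n$. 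The mechanisms are the same underneath --- both hinge on the fact that $b_n$ cannot dominate $A_n$, because $A_n\geq e^{-(c+\eta)n}$ eventually for every $\eta>0$ while $b_n=o(q^n)$ for every $q\in(0,1)$. The paper packages this observation as the preliminary claim $b_n<a_n$ and leaves it unjustified as ``clearly'', whereas your contradiction step makes exactly this reasoning explicit (via the choice $q=e^{-(c+\varepsilon/2)}$, for which the sign conditions $c\geq 0$, $\varepsilon>0$ ensure $q\in(0,1)$). So your argument is, if anything, somewhat more self-contained; the paper's sandwich is a bit more compact once the preliminary inequality is granted. Both are fully correct.
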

\begin{proof}Clearly, there is some $N\in\N$ such that $b_n<a_n$ for all $n\geq N$. We get for
all $n\geq N$:
\begin{eqnarray*}
-\frac{1}{n}\log(a_n+b_n) &\leq & -\frac{1}{n}\log(a_n) =
-\frac{1}{n}\log\Bigl(\frac{1}{2}a_n+\frac{1}{2}a_n\Bigr) \\
&\leq &
-\frac{1}{n}\log\Bigl(\frac{1}{2}a_n+\frac{1}{2}b_n\Bigr) \leq
 -\frac{1}{n}\log \left(\frac{1}{2}\right) -\frac{1}{n}\log(a_n+b_n).
\end{eqnarray*}
Letting $n\to\infty$ yields that $-\frac{1}{n}\log(a_n)$ tends to
$c$.
\end{proof}
In order to apply the last lemma let $A_n:=\sum_{m\geq 0}
p^{(m)}(o,X_n)$, $a_n:=\sum_{m=0}^{n^2-1} p^{(m)}(o,X_n)$ and
$b_n:=\sum_{m\geq n^2} p^{(m)}(o,X_n)$. Thus, for $r\in(1,R)$ we get with
(\ref{eq:p_approx})
$$
b_n\leq \sum_{m\geq n^2} G(o,o|r) \Bigl(\frac{r}{1-r/R}\Bigr)^n
\cdot r^{-m}= G(o,o|r)\Bigl(\frac{r}{1-r/R}\Bigr)^n
\frac{r^{-n^2}}{1-1/r}.
$$
Thus, $b_n$ decays faster than any geometric sequence. Lemma
\ref{cut-lemma} together with (\ref{eq:h_in_termsof_G}) yields
$$
h=\lim_{n\to\infty}  -\frac{1}{n} \log \sum_{m=0}^{n^2-1} p^{(m)}(o,X_n).
$$
\par
Since $G(o,X_n)\leq 1/(1-1/R)$ and $G(o,X_n)\geq F(o,X_n)\geq \eps_0^n$
we get by an application of the Dominated Convergence Theorem:
\begin{eqnarray*}
h & = &  \int \lim_{n\to\infty} -\frac{1}{n} \log \sum_{m=0}^{n^2-1}
p^{(m)}(o,X_n)\, d\P \\
&=&
\lim_{n\to\infty} \int -\frac{1}{n} \log \sum_{m=0}^{n^2-1}
p^{(m)}(o,X_n)\, d\P\\
&=& \lim_{n\to\infty} -\frac{1}{n} \sum_{x\in\T} p^{(n)}(o,x) \log \sum_{m=0}^{n^2-1}
p^{(m)}(o,x).
\end{eqnarray*}
Recall that Shannon's Inequality gives
$$
\sum_{x\in\T} p^{(n)}(o,x) \log \mu(x) \leq \sum_{x\in\T} p^{(n)}(o,x) \log p^{(n)}(o,x)
$$
for every finitely supported probability measure $\mu$ on $\T$.
Setting $\mu(x):=n^{-2} \sum_{m=0}^{n^2-1}
p^{(m)}(o,x)$ we get
\begin{eqnarray*}
h & \geq & \limsup_{n\to\infty} \left( -\frac{1}{n} \sum_{x\in\T}
p^{(n)}(o,x) \log n^2 -\frac{1}{n} \sum_{x\in\T} p^{(n)}(o,x) \log
p^{(n)}(o,x)\right) \\
&=& \limsup_{n\to\infty} -\frac{1}{n}\E\bigl[\log \pi_n(X_n)\bigr].
\end{eqnarray*}
Now we can conclude from Fatou's Lemma and (\ref{equ:liminf-h}) :
\begin{eqnarray}
h  \leq \int \liminf_{n\to\infty} \frac{-\log \pi_n(X_n)}{n} d\P & \leq &
\liminf_{n\to\infty} \int \frac{-\log \pi_n(X_n)}{n} d\P \nonumber\\
& \leq &
\limsup_{n\to\infty} -\frac{1}{n} \E\bigl[\log \pi_n(X_n)\bigr] \leq h.\label{eq:entropy-final}
\end{eqnarray}
Thus, $\lim_{n\to\infty} -\frac{1}{n} \E\bigl[\log
\pi_n(X_n)\bigr]$ exists and equals $h$.
It remains to show:
\begin{lem}
$$
h=\ell_0  \sum_{i,j\in G} -\nu(i)\,q(i,j) \log q(i,j).
$$
\end{lem}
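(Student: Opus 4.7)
The plan is to apply Theorem~\ref{thm:rateofescape}.1 to the length function $l(x)=-\log L(o,x|1)$ and then perform a purely algebraic reduction using the explicit formulas for $L(x,y|1)$ and $q(i,j)$ together with the stationarity of $\nu$ under $Q$.

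First I would verify that $w(i,j):=-\log L(x,y|1)$, with $\tau(x)=i$ and $y$ a successor of type $j$, defines a bounded weight function on $G\times G$, so that Theorem~\ref{thm:rateofescape}.1 actually applies. Boundedness follows from two easy estimates already available in the excerpt: the lower bound $L(x,y|1)\geq p(x,y)\geq \eps_0$ (one-step contribution), and the upper bound $L(x,y|1)=p(x,y)\overline G_{\tau(y)}(1)\leq (1-\eps_0)/(1-1/R)$ obtained from Equation~(\ref{l-equation}) and Lemma~\ref{lem:g-estimate}. Combining these with the fact that the preceding argument already established $h=\lim_{n\to\infty} l(X_n)/n$ almost surely, Theorem~\ref{thm:rateofescape}.1 gives
\begin{equation*}
h \;=\; \ell_0 \sum_{i,j\in G} w(i,j)\,\nu(i)\,q(i,j) \;=\; -\ell_0 \sum_{i,j\in G} \nu(i)\,q(i,j)\,\log L(x,y|1).
\end{equation*}

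The second step is to replace $\log L(x,y|1)$ by $\log q(i,j)$ inside the sum. Using the identity $F(-i|1)=\overline G_i(1)\,p(-i)$, Equation~(\ref{l-equation}) yields
\begin{equation*}
\log L(x,y|1) \;=\; \log p(i,j)+\log F(-j)-\log p(-j),
\end{equation*}
while the definition (\ref{def:q}) of $q$ combined with $\overline G_i(1)=F(-i)/p(-i)$ gives
\begin{equation*}
\log q(i,j) \;=\; \log(1-F(-j))-\log(1-F(-i))+\log p(i,j)+\log F(-i)-\log p(-i).
\end{equation*}
The difference $\log L(x,y|1)-\log q(i,j)$ then splits into terms depending only on $i$ and terms depending only on $j$, namely
\begin{equation*}
\bigl[\log F(-j)-\log(1-F(-j))-\log p(-j)\bigr] - \bigl[\log F(-i)-\log(1-F(-i))-\log p(-i)\bigr].
\end{equation*}
Since $\nu$ is $Q$-invariant, summing the $j$-terms against $\nu(i)q(i,j)$ yields $\sum_j \nu(j)[\log F(-j)-\log(1-F(-j))-\log p(-j)]$, which exactly cancels the contribution of the $i$-terms. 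Therefore
\begin{equation*}
\sum_{i,j\in G} \nu(i)\,q(i,j)\log L(x,y|1) \;=\; \sum_{i,j\in G}\nu(i)\,q(i,j)\,\log q(i,j),
\end{equation*}
and substituting into the previous display produces the claimed formula
\begin{equation*}
h \;=\; \ell_0\sum_{i,j\in G} -\nu(i)\,q(i,j)\,\log q(i,j).
\end{equation*}

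Positivity, asserted in the statement of Theorem~\ref{th:entropy}, is not hard once the formula is in hand: the right-hand side is the product of $\ell_0$ (which is positive by Theorem~\ref{thm:rateofescape}.1, since $R>1$ forces $\Lambda<\infty$) and the entropy rate of the positive recurrent Markov chain $(\tau(\W k))_{k\in\N_0}$, which is strictly positive unless this chain is deterministic; but determinism is excluded because $p(-i)$ and the $p_G(i,\cdot)$ are all strictly positive, so $Q$ has strictly positive rows. The main technical obstacle is really only checking boundedness of $w$; the rest is a clean cancellation that relies crucially on $\nu Q=\nu$, and on the spectral radius assumption ensuring $R>1$ so that the generating-function identities are valid at $z=1$.
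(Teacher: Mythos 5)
Your route is genuinely different from the paper's: instead of the telescoping decomposition of $l(X_{\e k})$ along the exit chain and the subsequence argument that uses recurrence of $\bigl(\tau(\W k)\bigr)_k$ to $i_0$, you apply Theorem~\ref{thm:rateofescape}.1 directly with $w=-\log L$ (correctly checking boundedness) and then perform a purely algebraic cancellation inside the resulting sum. The algebra itself is right: with $g(i):=\log F(-i)-\log(1-F(-i))-\log p(-i)$ one has $-\log L(x,y|1)=-\log q(i,j)+g(i)-g(j)$, and stationarity of $\nu$ makes $\sum_{i,j}\nu(i)q(i,j)\bigl(g(i)-g(j)\bigr)$ vanish --- \emph{provided} $g$ is $\nu$-integrable. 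That is where there is a genuine gap. The summands $\log F(-i)$ and $\log p(-i)$ are uniformly bounded, but $-\log\bigl(1-F(-i)\bigr)$ need not be: over an infinite $G$ the hypotheses do not force the escape probabilities $1-F(-i)$ to be bounded away from $0$ (the bound that $R>1$ yields, $F(-i)\leq p(-i)/(1-1/R)$, is typically $\geq 1$). If $\sum_i\nu(i)\bigl|\log(1-F(-i))\bigr|=\infty$, your cancellation is an $\infty-\infty$ manipulation and the identity $\sum\nu q\,\log L=\sum\nu q\,\log q$ does not follow. Closely tied to this, you never establish $h':=-\sum_{i,j}\nu(i)q(i,j)\log q(i,j)<\infty$; in the paper a separate contradiction argument (truncating $-\log q$ by $N$, applying the ergodic theorem to $h_N'$, and comparing against the already-known finite value of $h$) is devoted to ruling out exactly $h'=\infty$. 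The paper's path-level approach also sidesteps the integrability issue entirely: along the subsequence where $\tau(X_{\e k})=i_0$ the boundary term $\frac{1}{\e k}\log\frac{1-F(-\tau(X_{\e k}))}{1-F(-i_0)}$ vanishes with no uniform control needed. For finite $G$ all of this is automatic and your proof is a clean improvement; for infinite $G$ you must either supply the $\nu$-integrability of $-\log(1-F(-\cdot))$ and the finiteness of $h'$, or fall back on the paper's subsequence-and-truncation argument.
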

\begin{proof}
For a moment let be $x\in\T$ with $|x|=n$ and let $x_j$ be the element on the geodesic from $o$ to
$x$ at distance $j$ from $o$. Then:
\begin{eqnarray}
l(x)
& = & -\log \prod_{j=1}^{n} L(x_{j-1},x_j|1)\nonumber
\\
&=&  -\log \prod_{j=1}^{n} p\bigl(\tau(x_{j-1}),\tau(x_j)\bigr) \,
\overline{G}_{\tau(x_{j-1})}(1) \frac{1-F\bigl(-\tau(x_{j})\bigr)}{1-F\bigl(-\tau(x_{j-1})\bigr)}
+\nonumber\\
&& \hspace{6cm} + \log
\frac{1-F\bigl(-\tau(x_n)\bigr)}{1-F(-i_0)}+\log
\frac{\overline{G}_{i_0}(1)}{\overline{G}_{\tau(x_n)}(1)}\nonumber\\
&=&  -\log \prod_{j=1}^{n} q\bigl(\tau(x_{j-1}),\tau(x_j)\bigr) +\log
\frac{1-F\bigl(-\tau(x_n)\bigr)}{1-F(-i_0)}+\log
\frac{\overline{G}_{i_0}(1)}{\overline{G}_{\tau(x_n)}(1)}.\label{q-entropy}
\end{eqnarray}
As $l(X_n)/n$ tends to $h$ almost surely, the subsequence
$\bigl(l(X_{\e{k}})/\e{k}\bigr)_{k\in\N}$ converges also to $h$.
Since $1\leq \overline{G}_i(1)\leq 1/(1-1/R)$ by Lemma
\ref{lem:g-estimate}, it follows with $x=X_{\e{k}}$ in
(\ref{q-entropy}) that
$$
\frac{1}{\e{k}} \log \frac{\overline{G}_0(1)}{\overline{G}_{\tau(X_{\e{k}})}(1)}
\xrightarrow{k\to\infty} 0 \quad \textrm{ almost surely}.
$$
By positive recurrence of $\bigl(\tau(X_{\e{k}})\bigr)_{k\in\N}$, an
application of the ergodic theorem yields
$$
-\frac{1}{k} \log \prod_{j=1}^{k}
q\bigl(\tau(X_{\e{j-1}}),\tau(X_{\e{j}})\bigr) \xrightarrow{n\to\infty}
h':=-\sum_{i,j\in G} \nu(i)\,q(i,j) \log q(i,j) \quad \textrm{ almost surely},
$$
whenever $h'<\infty$. In the latter case, since
$\lim_{k\to\infty}
k/\e{k}=\ell_0$ (see proof of Theorem \ref{thm:rateofescape}) and 
$\liminf_{n\to\infty} -\frac{1}{\e{k}}
\bigl(1-F(-\tau(X_{\e{k}}))\bigr)=0$ by ergodicity of
$\bigl(\tau(X_{\e{k}})\bigr)_{k\in\N}$, we
have
$$
h = \lim_{k\to\infty} \frac{l\bigl(X_{\e{k}}\bigr)}{\e{k}} =
\lim_{k\to\infty} \frac{l\bigl(X_{\e{k}}\bigr)}{k}\frac{k}{\e{k}}
= h'  \ell_0.
$$
In particular, $h>0$ since $\ell_0>0$ by Theorem
\ref{thm:rateofescape}.2.
\par
It remains to show that it cannot be that $h'=\infty$. For this
purpose, assume  $h'=\infty$. Let $N\in\N$ and define $h_N: G
\times G \to \R$ by $h_N(i,j):= N \land \bigl(-\log q(i,j)\bigr)$.
Then
$$
-\frac{1}{k} \sum_{j=1}^k \log h_N  \bigl(\tau(X_{\e{j-1}}),\tau(X_{\e{j}})\bigr) \xrightarrow{k\to\infty}
h_N':=-\sum_{i,j\in G} \nu(i)\,q(i,j) \log h_N(i,j) \quad \textrm{ almost surely}.
$$
Since $h_N(i,j)\leq -\log q(i,j)$ and $h'=\infty$ by assumption, there is for every
$M\in\R$ and almost every trajectory of $\bigl(\tau(X_{\e{k}})\bigr)_{k\in\N_0}$
an almost surely finite random time $\mathbf{T_q}\in\N$ such that for all $k\geq \mathbf{T_q}$
\begin{equation*}\label{M-inequality}
-\frac{1}{k} \sum_{j=1}^k \log q \bigl(\tau(X_{\e{j-1}}),\tau(X_{\e{j}})\bigr)
> M.
\end{equation*}
On the other hand there is for every $M>0$, every small $\eps >0$
and almost every trajectory an almost surely finite random time
$\mathbf{T_L}$ such that for all $k\geq \mathbf{T_L}$
\begin{eqnarray*}
&&-\frac{1}{\e{k}} \sum_{j=1}^k \log L\bigl(X_{\e{j-1}},X_{\e{j}}|1\bigr) \in (h-\eps,
h+\eps)\quad \textrm{ and}\\
&& -\frac{1}{\e{k}} \sum_{j=1}^k \log
q\bigl(\tau(X_{\e{j-1}}),\tau(X_{\e{j}})\bigr) =
-\frac{k}{\e{k}}\frac{1}{k} \sum_{j=1}^k \log
q\bigl(\tau(X_{\e{j-1}}),\tau(X_{\e{j}})\bigr)
> \ell_0 M-\eps.
\end{eqnarray*}
Furthermore, by positive recurrence of
$\bigl(\tau(X_{\e{k}})\bigr)_{k\in\N_0}$ there is an almost surely
finite random time $\mathbf{T} \geq \mathbf{T_L}$ such that
$$
-\frac{1}{\e{\mathbf{T}}}\log \frac{1-F(-\tau(X_{\e{\mathbf{T}}}))}{1-F(-0)}
\in (-\eps,\eps) \quad \textrm{ and } \quad
\frac{1}{\e{\mathbf{T}}} \log \frac{\overline{G}_0(1)}{\overline{G}_{\tau(X_{\e{\mathbf{T}}})}(1)}
\in (-\eps,\eps).
$$
Choose now $M>(h+4\eps)/\ell_0$. We obtain the desired
contradiction when we substitute in equality (\ref{q-entropy}) the
vertex $x$ by $X_{\e{\mathbf{T}}}$, divide by $\e{\mathbf{T}}$ on
both sides and see that the left side is in $(h-\eps,h+\eps)$ and
the rightmost side is larger than $h+\eps$.
\end{proof}
This finishes the proof of Theorem \ref{th:entropy}.
\end{proof}
\begin{proof}[Proof of Corollary \ref{cor:entropy2}]
Recall Inequality (\ref{equ:liminf-h}). Integrating both sides of this
inequality yields together with the inequality chain (\ref{eq:entropy-final})
that
$$
\int \liminf_{n\to\infty}  -\frac{\log \pi_n(X_n)}{n}  -h \,d\P = 0,
$$
providing that $h=\liminf_{n\to\infty} -\frac{1}{n} \log \pi_n(X_n)$ almost
surely.
\end{proof}

\subsection{Proof of Theorem \ref{thm:entropy3}}

To prove the theorem we need the following lemma:
\begin{lem}\label{lem:entropy3}
Under the assumptions of Theorem \ref{th:entropy}, 
$$
-\frac{1}{n}\log \pi_n(X_n) \xrightarrow{\P} h,
$$
that is, $-\frac{1}{n}\log \pi_n(X_n)$ converges in probability to the
asymptotic entropy.
\end{lem}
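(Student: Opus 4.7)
Abbreviate $Y_n := -\frac{1}{n}\log \pi_n(X_n)$; the goal is to show $Y_n \to h$ in probability. Three ingredients are already available from the proof of Theorem \ref{th:entropy}: (i) $Y_n \geq 0$ for all $n$ since $\pi_n(X_n)\in(0,1]$; (ii) $\liminf_{n\to\infty} Y_n \geq h$ almost surely, by inequality (\ref{equ:liminf-h}); and (iii) $\E[Y_n] \to h$ as $n\to\infty$, by Theorem \ref{th:entropy}. The strategy is to control the two tails $\{Y_n < h-\eps\}$ and $\{Y_n > h+\eps\}$ separately.

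\textbf{Lower tail.} From (ii), $(h - Y_n)^+ \to 0$ almost surely, and from (i) this nonnegative random variable is dominated by the constant $h$. Hence dominated convergence gives $\E[(h - Y_n)^+] \to 0$, and Markov's inequality yields
\begin{equation*}
\P\bigl[Y_n < h - \eps\bigr] \;\leq\; \eps^{-1}\,\E[(h - Y_n)^+] \;\xrightarrow{n\to\infty}\; 0
\end{equation*}
for every $\eps>0$.

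\textbf{Upper tail.} This is the step where the three ingredients have to cooperate. Using the pointwise identity $Y_n - h = (Y_n - h)^+ - (h - Y_n)^+$ and taking expectations,
\begin{equation*}
\E\bigl[(Y_n - h)^+\bigr] \;=\; \bigl(\E[Y_n] - h\bigr) \;+\; \E\bigl[(h - Y_n)^+\bigr],
\end{equation*}
and both summands on the right tend to $0$ by (iii) and by the lower-tail step. Markov's inequality then gives $\P[Y_n > h + \eps] \leq \eps^{-1}\E[(Y_n-h)^+] \to 0$, and combining the two tails completes the proof.

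\textbf{Where the work sits.} There is no genuine obstacle once the ingredients are in place, but the nontrivial observation is the upper tail: we have no pathwise upper bound on $Y_n$, and (ii) alone controls only the downside. What makes the upper tail collapse is the \emph{equality} $\E[Y_n]\to h$ from (iii) together with the a.s.\ lower bound from (ii); this is precisely the content of the one-line identity above. As a by-product, the same computation yields $\E[|Y_n - h|] = \E[(Y_n - h)^+] + \E[(h - Y_n)^+] \to 0$, so the argument in fact proves the $L_1$ convergence asserted in Theorem \ref{thm:entropy3} as well.
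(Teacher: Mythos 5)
Your proof is correct. The three ingredients you isolate are exactly what the paper uses: $Y_n\geq 0$, the a.s.\ bound $\liminf_n Y_n\geq h$ from (\ref{equ:liminf-h}), and $\E[Y_n]\to h$ from Theorem \ref{th:entropy}. The underlying idea — that the a.s.\ one-sided control pins down the lower tail, and then the convergence of expectations leaves no room for mass above $h+\eps$ — is also the same. What differs is the bookkeeping: the paper partitions the integral over the three events $\{Y_n\leq h-\delta_1\}$, $\{h-\delta_1<Y_n\leq h+\eps\}$, $\{Y_n>h+\eps\}$, drops the first region, lower-bounds $Y_n$ on the other two, and then solves a chain of inequalities in $\delta_1,\delta_2,\eps$ for $\P[Y_n>h+\eps]$; you replace all of that with the one-line identity $\E[(Y_n-h)^+]=(\E[Y_n]-h)+\E[(h-Y_n)^+]$ and two applications of Markov. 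Your version is tidier, and it has one genuine advantage: it produces $\E|Y_n-h|\to 0$ for free, so Theorem \ref{thm:entropy3} drops out of the same computation. The paper instead proves Theorem \ref{thm:entropy3} as a separate step by combining Lemma \ref{lem:entropy3} with the uniform bound $0\leq Y_n\leq -\log\eps_0$ (the paper has a sign typo there, writing $\log\eps_0$). Your route does not need that boundedness to reach $L_1$ convergence, so it is marginally more general as well as shorter.
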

\begin{proof}
For every $\delta_1>0$ there is some index $N_{\delta_1}$ such that for all
$n\geq N_{\delta_1}$
$$
\int -\frac{1}{n} \log \pi_n(X_n)\,d\P \in (h-\delta_1,h+\delta_1).
$$
Furthermore, due to Corollary \ref{cor:entropy2} there is for every $\delta_2>0$ some index $N_{\delta_2}$ such
that for all $n\geq N_{\delta_2}$
\begin{equation}\label{equ:stochconv1}
\P\Bigl[-\frac{1}{n} \log \pi_n(X_n)>h-\delta_1\Bigr] > 1-\delta_2.
\end{equation}
Since $h=\lim_{n\to\infty} \int -\frac{1}{n}\log \pi_n(X_n)\,d\P$ it must be that
for every arbitrary but fixed $\eps>\delta_1$ and for $n$ large enough
$$
\P\Bigl[-\frac{1}{n}\log \pi_n(X_n)>h-\delta_1\Bigr]\cdot (h-\delta_1) + 
\P\Bigl[-\frac{1}{n}\log \pi_n(X_n)>h +\eps\Bigr]\cdot (\eps+\delta_1) 
\leq h+\delta_1,
$$
or equivalently,
$$
\P\Bigl[-\frac{1}{n}\log \pi_n(X_n)>h +\eps\Bigr] \leq
\frac{h+\delta_1 -\P\Bigl[-\frac{1}{n}\log \pi_n(X_n)>h-\delta_1\Bigr]\cdot (h-\delta_1)}{\eps+\delta_1}.
$$
If we let $\delta_2 \to 0$, we get
$$
\limsup_{n\to\infty} \P\Bigl[-\frac{1}{n}\log \pi_n(X_n)>h +\eps\Bigr] \leq \frac{2\delta_1}{\eps+\delta_1}.
$$
Since we can choose $\delta_1$ arbitrarily small we get
$$
\P\Bigl[-\frac{1}{n}\log \pi_n(X_n)>h +\eps\Bigr] \xrightarrow{n\to\infty} 0
\quad \textrm{ for all } \eps>0. 
$$
But this yields convergence in probability of $-\frac{1}{n}\log \pi_n(X_n)$ to $h$
together with (\ref{equ:stochconv1}).
\end{proof}
For any small $\eps>0$ and $n\in\N$, we define the events
$$
A_{n,\eps}:=\biggl[\Bigl|-\frac{1}{n}\log \pi_n(X_n)-h\Bigr|\leq \eps\biggr]
\ \textrm{ and } B_{n,\eps}:=\biggl[\Bigl|-\frac{1}{n}\log \pi_n(X_n)-h\Bigr|> \eps\biggr].
$$
There is some $N_\eps\in\N$ such that $\P[B_{n,\eps}]<\eps$ for all $n\geq
N_\eps$. Since $0\leq -\frac{1}{n}\log \pi_n(X_n)\leq \log \eps_0$ we can
conclude from Lemma \ref{lem:entropy3} for $n\geq N_\eps$:
\begin{eqnarray*}
&&\int \Bigl|-\frac{1}{n}\log \pi_n(X_n) -h\Bigr|\, d\P \\
& = &
\int_{A_{n,\eps}} \Bigl|-\frac{1}{n}\log \pi_n(X_n) -h\Bigr|\, d\P+
\int_{B_{n,\eps}} \Bigl|-\frac{1}{n}\log \pi_n(X_n) -h\Bigr|\, d\P \\[1ex]
&\leq &  \eps + \eps \log \eps_0 \xrightarrow{\eps\to 0} 0. 
\end{eqnarray*}
Thus, we have proved the theorem.
\begin{flushright}$\Box$\end{flushright}

\section*{Acknowledgement}
The authors are grateful to the German Research Foundation (DFG) for supporting
their projects, to which this work belongs. We acknowledge also the fruitful interplay and
exchange with the Austrian Science Fund (FWF) project FWF-P19115-N18 at Graz
University of Technology.

\begin{small}
\addcontentsline{toc}{chapter}{Bibliography}
\bibliography{bib}
\end{small}

\end{document}